\def\ps@pprintTitle{%
  \let\@oddhead\@empty
  \let\@evenhead\@empty
  \def\@oddfoot{\reset@font\hfil\thepage\hfil}
  \let\@evenfoot\@oddfoot}
\newtheorem{lettertheorem}{Theorem}
\newtheorem{defin}{Definition}[section]
\newtheorem{theorem}[defin]{Theorem}
\newtheorem{lemma}[defin]{Lemma}
\newtheorem{qu}{Question}
\newtheorem{exa}[defin]{Example}
\newenvironment{example}{\begin{exa}\rm}{\end{exa}}
\newtheorem{remark}[]{Remark}
\begin{document}

\begin{frontmatter}

\title{On value distribution of certain delay-differential polynomials}

\author[mymainaddress]{Nan Li\corref{cor1}} 
\ead{nanli32787310@163.com}

\author[mysecondaryaddress]{Lianzhong Yang}
\ead{lzyang@sdu.edu.cn}

\cortext[cor1]{Corresponding author: Nan Li}

\address[mymainaddress]{School of Mathematics, Qilu Normal University, Jinan, Shandong, 250200, P.R.China}

\address[mysecondaryaddress]{School of Mathematics, Shandong University,
  Jinan, Shandong, 250100, P.R.China}

\begin{abstract}
Given an entire function $f$ of finite order $\rho$, let
$L(z,f)=\sum_{j=0}^{m}b_{j}(z)f^{(k_{j})}(z+c_{j})$ be a linear
delay-differential polynomial of $f$ with small  coefficients in the sense
of $O(r^{\lambda+\varepsilon})+S(r,f)$, $\lambda<\rho$.
Provided $\alpha$, $\beta$ be similar small functions, we consider the
zero distribution of $L(z,f)-\alpha f^{n}-\beta$ for $n\geq 3$ and
$n=2$, respectively. Our results are improvements and complements of  Chen(Abstract  Appl. Anal., 2011, 2011: ID239853, 1--9),
 and Laine (J. Math. Anal. Appl. 2019, 469(2):  808--826.), etc.
\end{abstract}

\begin{keyword}
Meromorphic functions \sep delay-differential polynomial \sep
value distribution.

\MSC[2010]  30D35.
\end{keyword}
\end{frontmatter}

\section{Introduction}

Let $f(z)$ be a transcendental meromorphic function in the complex plane
$\mathbb{C}$.  We assume that the reader is familiar with the standard notations and main results in Nevanlinna theory (see \cite{Hayman},\cite{Laine},\cite{Yi1}). A meromorphic function $\alpha$ is
said to be a $\lambda$-small function of a meromorphic function $f$ of
finite order  $\rho$, if there exists $\lambda<\rho$, such that
for any $\varepsilon\in (0,\rho-\lambda)$,
\begin{eqnarray}\label{introeq2aaa}
  T(r,\alpha)=O\left(r^{\lambda+\varepsilon} \right)+S(r,f),
\end{eqnarray}
outside a possible exceptional set $F$ of finite logarithmic measure.
Here, $S(r,f)$ is any quantity that satisfies $S(r,f)=o(T(r,f))$ as
$r\to\infty$ outside a set $F$. For the sake of simplicity, the right
hand side in \eqref{introeq2aaa} will be denoted by $S_{\lambda}(r,f)$.

Hayman \cite{Hayman2}   proved the following theorem.
\begin{lettertheorem}\label{thmAH}
If $f(z)$ is a transcendental entire function, $n\geq 3$ is an integer
and $a(\neq 0)$ is a constant, then $f'(z)-af(z)^{n}$ assumes all finite
values infinitely often.
\end{lettertheorem}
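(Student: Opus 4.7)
The plan is to argue by contradiction. Suppose that for some $c \in \mathbb{C}$, the function $\psi(z) := f'(z) - af(z)^{n} - c$ has only finitely many zeros, so $N(r,1/\psi) = O(\log r) = S(r,f)$. Since $f$ is entire, so is $\psi$; hence $N(r,\psi)=0$, and the first main theorem gives $m(r,1/\psi) = T(r,\psi) + S(r,f)$. This is the key resource extracted from the hypothesis.

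First I would establish a lower bound for $T(r,\psi)$. From the rearrangement $af^{n} = f' - c - \psi$, the estimate $T(r,f') \leq T(r,f) + S(r,f)$ (valid for entire $f$), and subadditivity of the characteristic, one gets $n T(r,f) = T(r, af^{n}) + O(1) \leq T(r, f') + T(r, \psi) + O(1) \leq T(r, f) + T(r,\psi) + S(r,f)$, so $(n-1)T(r,f) \leq T(r,\psi) + S(r,f)$. For the matching upper bound, I would divide the same equation by $f^{n}$ to get $\psi/f^{n} = f'/f^{n} - a - c/f^{n}$, apply the logarithmic derivative lemma via $m(r, f'/f^{n}) \leq m(r, f'/f) + (n-1)m(r,1/f) = (n-1)m(r,1/f) + S(r,f)$, and then estimate the proximity function $m(r, af^{n}/\psi)$ in two ways: from above, using that the poles of $af^{n}/\psi$ lie only at the finitely many zeros of $\psi$ (so $N(r, af^{n}/\psi) = S(r,f)$); from below, via $m(r, af^{n}/\psi) \geq m(r, af^{n}) - m(r,\psi) = nT(r,f) - T(r,\psi) + O(1)$.

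The decisive step is sharpening the resulting upper bound so that, combined with the lower bound, it produces a contradiction when $n \geq 3$. This requires exploiting the structure of zeros of $f$ relative to $\psi$: at a zero $z_0$ of $f$ of multiplicity $k$, direct computation gives $\psi(z_0) = f'(z_0) - c$ if $k = 1$ and $\psi(z_0) = -c$ if $k \geq 2$. When $c = 0$, every multiple zero of $f$ (of order $k$) forces a zero of $\psi$ of order $k-1$, so the hypothesis on $\psi$ yields $N(r,1/f) - \bar N(r,1/f) = S(r,f)$; when $c \neq 0$, a more delicate argument using the second main theorem applied to $f$ at $0$, $\infty$, and the roots of $az^{n} + c = 0$ is required. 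In either case, combining with the two previous estimates eventually forces $T(r,f) = S(r,f)$, contradicting the transcendence of $f$. The main obstacle is precisely this sharpening: the trivial upper bound $T(r,\psi) \leq (n+1)T(r,f) + S(r,f)$ only loosely matches the lower bound, so one must specifically exploit the scarcity of zeros of $\psi$ through a Clunie-type identity, which is where the condition $n \geq 3$ becomes essential.
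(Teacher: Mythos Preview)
The paper does not prove Theorem~A; it is Hayman's classical result, quoted as background. What the paper \emph{does} prove is the generalization Theorem~1.1, whose proof specializes to a clean proof of Theorem~A and is worth comparing with your sketch.

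Your proposal correctly sets up the contradiction hypothesis and obtains the lower bound $(n-1)T(r,f)\le T(r,\psi)+S(r,f)$. After that, however, the argument drifts: the manipulations with $m(r,af^{n}/\psi)$, the case split on $c=0$ versus $c\neq 0$, and the second-main-theorem idea for the roots of $az^{n}+c=0$ do not combine into a contradiction. You yourself acknowledge this in the last paragraph, where you say the decisive step is a ``Clunie-type identity'' that you do not write down. So the proposal names the right tool but never uses it; as written it is a plan with a gap at exactly the crucial point.

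The paper's route (specialized to the setting of Theorem~A) is the standard execution of that missing step and is much shorter than your preliminary estimates. From the hypothesis one writes $\psi=\pi e^{g}$ with $\pi$ a polynomial, differentiates, and eliminates $e^{g}$ to obtain
\[
f^{\,n-1}G = H,\qquad G:=\Big(\tfrac{\pi'}{\pi}+g'\Big)af - naf',\qquad H:=\Big(\tfrac{\pi'}{\pi}+g'\Big)(f'-c)-f''.
\]
If $G\equiv 0$ one integrates to $af^{n}=\widetilde{c}\,\pi e^{g}$ and reaches a contradiction directly. If $G\not\equiv 0$, Clunie's lemma applied to $f^{\,n-1}G=H$ and to $f^{\,n-2}(fG)=H/a$ (this second application is exactly where $n\ge 3$ enters) yields $m(r,G)=S(r,f)$ and $m(r,fG)=S(r,f)$, hence $T(r,f)=S(r,f)$, contradicting transcendence. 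None of your second-main-theorem or zero-multiplicity considerations are needed; the whole weight is carried by the differentiate--eliminate--Clunie manoeuvre you only alluded to.
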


Recently, several articles (see \cite{chen2011, chen2,chen-gao-zhang, Chiang-Feng, Halburd-Korhonen, Korhonen2009,   Laine2019, Laine-Latreuch, Liu-Laine, Liu-Laine-Yang, Liu-Yi,Liu-Zhang-Qi,Long-Wu-Zhu, Zheng-Chen} etc.) have focused on complex differences, giving many
difference analogues in value distribution theory of meromorphic functions.

In 2011,  Chen \cite{chen2} obtained the following Theorem~\ref{thmAJ},  an almost direct difference analogue of Theorem~\ref{thmAH}, and gave an estimate of numbers of $b$-points, namely,
$\lambda(\Psi_{n}(z)-b)=\sigma(f)$ for every $b\in \mathbb{C}$.
.

\begin{lettertheorem}\label{thmAJ}
Let $f(z)$ be a transcendental entire function of finite order, and let
$\alpha,c\in \mathbb{C}\setminus\{0\}$ be constants, with $c$ such that $f(z+c)\not\equiv f(z)$. Set $\Psi_{n}(z)=\Delta f(z)-\alpha f(z)^{n}$, where
$\Delta f(z)=f(z+c)-f(z)$ and $n\geq 3$ is an integer. Then
$\Psi_{n}(z)$ assumes all finite values infinitely often, and for every $\beta\in \mathbb{C}$, we have $\lambda(\Psi_{n}(z)-\beta)=\sigma(f)$.
\end{lettertheorem}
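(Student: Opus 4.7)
Let $h := \Psi_n-\beta = \Delta f - \beta - \alpha f^n$, an entire function. Recall $T(r,f(z+c))=T(r,f)+S(r,f)$ for $f$ of finite order. Combining this with $\alpha f^n = \Delta f - \beta - h$ gives $(n-2)T(r,f)+S(r,f) \leq T(r,h) \leq (n+2)T(r,f)+S(r,f)$, so $\sigma(h)=\sigma(f)$ and $S(r,h)=S(r,f)$; in particular $\lambda(h)\leq\sigma(f)$. I aim to prove the matching lower bound
\[
(n-2)\,T(r,f) \leq \overline{N}(r,1/h) + S(r,f),
\]
which for $n\geq 3$ forces $\overline{N}(r,1/h)\to\infty$ (so $\Psi_n$ attains $\beta$ infinitely often) and $\lambda(h)=\sigma(f)$.

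\textbf{Main case.} If $h\equiv 0$ then $\alpha f^n = \Delta f - \beta$, giving $(n-2)T(r,f)=S(r,f)$, impossible. So $h\not\equiv 0$; differentiating $\alpha f^n = \Delta f - \beta - h$ and substituting $h' = (h'/h)\cdot h$ produces the identity
\[
\alpha f^n\, A = (\Delta f)' - \frac{h'}{h}(\Delta f-\beta), \qquad A := n\frac{f'}{f} - \frac{h'}{h}.
\]
Assume first that $A\not\equiv 0$. Dividing by $f$ and taking proximity functions, I would estimate the right-hand side via: (i) $m(r,(\Delta f)'/f)=S(r,f)$, by splitting $(\Delta f)'/f = \bigl(f'(z+c)/f(z+c)\bigr)\bigl(f(z+c)/f(z)\bigr) - f'/f$ and invoking the classical and Chiang--Feng/Halburd--Korhonen difference logarithmic derivative lemmas; (ii) $m(r,h'/h)=S(r,f)$; (iii) $m(r,(\Delta f-\beta)/f)\leq m(r,\Delta f/f) + m(r,1/f) = T(r,f)-N(r,1/f)+S(r,f)$. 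These yield $m(r,\alpha f^{n-1}A)\leq T(r,f)-N(r,1/f)+S(r,f)$. Since $m(r,A)=S(r,f)$ and $N(r,A)\leq \overline{N}(r,1/f)+\overline{N}(r,1/h)$, the first fundamental theorem gives $m(r,1/A)\leq \overline{N}(r,1/f)+\overline{N}(r,1/h)+S(r,f)$. Combining with $(n-1)T(r,f) = m(r,\alpha f^{n-1}) \leq m(r,\alpha f^{n-1}A) + m(r,1/A)+O(1)$ and cancelling via $\overline{N}(r,1/f)\leq N(r,1/f)$ delivers the target inequality.

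\textbf{Degenerate case and main obstacle.} If $A\equiv 0$, integration gives $h = Cf^n$ for some constant $C\neq 0$, hence $(C+\alpha)f^n = \Delta f-\beta$. The subcase $C+\alpha\neq 0$ is ruled out as above; if $C+\alpha=0$ then $\Delta f=\beta$, and the standing assumption $f(z+c)\not\equiv f(z)$ forces $\beta\neq 0$ and $\Psi_n-\beta = -\alpha f^n$, reducing the claim to $\lambda(f)=\sigma(f)$. From $f(z+c)=f(z)+\beta$ the zeros of $f-\beta$ are exactly the $c$-translates of the zeros of $f$, so $\overline{N}(r,1/(f-\beta))=\overline{N}(r,1/f)+S(r,f)$; the second main theorem applied to $f$ at $0,\beta,\infty$ then gives $T(r,f)\leq 2\overline{N}(r,1/f)+S(r,f)$, hence $\lambda(f)=\sigma(f)$. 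The principal obstacle will be the sharp bookkeeping in step (iii): only the splitting $(\Delta f-\beta)/f = \Delta f/f - \beta/f$, with $m(r,\Delta f/f)=S(r,f)$ coming from the difference lemma, keeps the coefficient $(n-2)$ in place; a cruder estimate yields $(n-5)$ or worse and misses the critical case $n=3$.
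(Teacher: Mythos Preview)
Your proof is correct. Note that the statement you prove (Theorem~B) is a cited result of Chen; the paper does not give a separate proof of it, but its proof of the generalization Theorem~1.1 specializes to cover this case, and that is the natural comparison point.

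Both arguments differentiate the relation $\alpha f^{n}=L(z,f)-\beta-h$ (with $h=\Psi_{n}-\beta$) and pivot on whether the auxiliary $A=nf'/f-h'/h$ vanishes; in the paper's notation $G(z,f)=-\alpha f\,A$ (since $\pi'/\pi+g'=h'/h$). The routes then diverge. The paper argues by contradiction: assuming $\lambda(h)<\rho$ it writes $h=\pi e^{g}$ via Hadamard, and in the non-degenerate case applies the Clunie-type Lemma~2.2 to $f^{n-1}G=H$ to force $T(r,f)=O(r^{\rho-1+\varepsilon})+S_{\lambda}(r,f)$. You instead work directly and obtain the quantitative inequality $(n-2)T(r,f)\le \overline{N}(r,1/h)+S(r,f)$ via proximity estimates and the first main theorem for $A$; this is sharper (it gives a counting-function lower bound, not just $\lambda=\rho$) and avoids both the Hadamard factorization and the Clunie machinery. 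In the degenerate case $A\equiv0$ the paper exploits the explicit form $f=(-\pi/\alpha)^{1/n}e^{g/n}$ and an exponential-polynomial growth argument to reach a contradiction with $L\equiv\beta$, whereas your treatment of $\Delta f\equiv\beta$ via the shift identity $f(z)-\beta=f(z-c)$ and the second main theorem at $0,\beta,\infty$ is more elementary and specific to the difference operator $\Delta$. In short: same skeleton, but your argument is direct and quantitative while the paper's is by contradiction using Hadamard/Clunie; the paper's version, in exchange, handles the general delay-differential operator $L(z,f)$ and $\lambda$-small coefficients in one stroke.
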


In 2013, Liu and Yi \cite{Liu-Yi} replaced $\Delta f(z)$ in Theorem~\ref{thmAJ} by a more general linear difference operator $g(f)=\sum_{j=1}^{k}a_{j}f(z+c_{j})$,
where $a_{j},c_{j}(j=1,2,\ldots,k)$ are complex constants,
and obtained the following result.
\begin{lettertheorem}\label{thmAC0}
Let $f(z)$ be a transcendental entire function of finite order $\rho(f)$, let
$\alpha,\beta$ be complex constants. Set $\Psi_{n}=g(f)-\alpha f^{n}(z)$, where
$n\geq 3$ is an integer. Then $\Psi_{n}$ have infinitely many zeros and
$\lambda(\Psi_{n}-\beta)=\rho(f)$ provided that $g(f)\not\equiv \beta$.
\end{lettertheorem}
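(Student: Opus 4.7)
The plan is to apply Nevanlinna's Second Main Theorem to $\Psi_n$ itself, exploiting that $\Psi_n$ is entire whenever $f$ is entire and the coefficients $a_j$ are constants. First, I would establish the characteristic estimate
\[ (n-1)T(r,f) + S(r,f) \leq T(r,\Psi_n) \leq (n+1)T(r,f) + S(r,f). \]
The upper bound is just subadditivity of $T$; the lower bound comes from rewriting $\alpha f^n = g(f) - \Psi_n$ and using $T(r,g(f)) \leq T(r,f) + S(r,f)$, which follows from the difference logarithmic derivative lemma $m(r,f(z+c)/f(z)) = S(r,f)$ valid for finite-order $f$. This also gives $\rho(\Psi_n)=\rho(f)$ and rules out $\Psi_n\equiv\beta$: otherwise $g(f)=\alpha f^n+\beta$ would force $nT(r,f) \leq T(r,f)+S(r,f)$, impossible for $n\geq 3$ and transcendental $f$.

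Since $\Psi_n$ is entire, Nevanlinna's Second Main Theorem applied to $\Psi_n$ with the three target values $\{0,\infty,\beta\}$ yields
\[ T(r,\Psi_n) \leq \overline{N}(r,1/\Psi_n) + \overline{N}(r,1/(\Psi_n-\beta)) + S(r,f). \]
The key analytic step is a sharp upper bound for $\overline{N}(r,1/\Psi_n)$. I would split the zeros of $\Psi_n$ according to whether they coincide with zeros of $f$: the first class contributes at most $\overline{N}(r,1/f)\leq T(r,f)+S(r,f)$, while the rest are zeros of the meromorphic function $g(f)/f^n - \alpha$. Using $m(r,g(f)/f^n) \leq (n-1)m(r,1/f)+S(r,f)$ via the difference logarithmic derivative lemma, together with a careful count of the poles of $g(f)/f^n$ (which occur only at zeros of $f$), one aims for a bound of the shape $\overline{N}(r,1/\Psi_n) \leq T(r,f)+S(r,f)$.

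Combining these estimates for $n\geq 3$ gives $\overline{N}(r,1/(\Psi_n-\beta)) \geq (n-2)T(r,f)+S(r,f) \geq T(r,f)+S(r,f)$, which shows that $\Psi_n-\beta$ has infinitely many zeros; together with the trivial inequality $\lambda(\Psi_n-\beta)\leq \rho(\Psi_n)=\rho(f)$, this yields $\lambda(\Psi_n-\beta)=\rho(f)$. The hardest part is the sharp bound on $\overline{N}(r,1/\Psi_n)$: a direct chain of estimates applied to $g(f)/f^n - \alpha$ loses a factor of order $T(r,f)$ through the term $nN(r,1/f)$, and getting around this requires carefully recognizing which potential zeros of $g(f)/f^n - \alpha$ near zeros of $f$ are actually cancelled by the $f^n$ factor when reconstructing $\Psi_n$. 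A Clunie-type lemma adapted to delay-difference polynomials is the natural tool to push this estimate through.
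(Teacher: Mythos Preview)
Theorem~C is quoted from Liu--Yi and not proved in this paper; the relevant comparison is with the paper's proof of Theorem~\ref{thmABABnew2}, which generalizes it. That proof proceeds by contradiction: assuming $\lambda(\Phi_n-\beta)<\rho$, one writes $\Phi_n-\beta=\pi e^{g}$ via Hadamard, differentiates, eliminates $e^{g}$, and obtains an equation $f^{n-1}G(z,f)=H(z,f)$ with $G$ linear in $f,f'$ and $H$ of degree one in $L,L'$. The Clunie-type Lemma~\ref{lemma24} then forces $T(r,f)=O(r^{\rho-1+\varepsilon})+S_\lambda(r,f)$ when $G\not\equiv0$, and the remaining case $G\equiv0$ is handled separately. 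The Clunie lemma enters only \emph{after} the contradiction hypothesis has produced the special structure $\pi e^{g}$.

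Your approach via the Second Main Theorem applied directly to $\Psi_n$ with targets $0,\beta,\infty$ is genuinely different, but the step you yourself flag as ``the hardest part'' is a real gap, not just a technical nuisance. The desired bound $\overline{N}(r,1/\Psi_n)\le T(r,f)+S(r,f)$ is not available: for zeros of $\Psi_n$ away from zeros of $f$ you are reduced to $\overline{N}\bigl(r,1/(g(f)/f^{n}-\alpha)\bigr)\le T(r,g(f)/f^{n})+O(1)$, and a straightforward estimate gives
\[
T\!\left(r,\frac{g(f)}{f^{n}}\right)\le (n-1)\,m\!\left(r,\frac{1}{f}\right)+n\,N\!\left(r,\frac{1}{f}\right)+S(r,f)=(n-1)T(r,f)+N\!\left(r,\frac{1}{f}\right)+S(r,f),
\]
which is too large by a factor of roughly $n-1$. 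There is no cancellation to recover here: those zeros are genuine zeros of $\Psi_n$, and nothing prevents $\overline{N}(r,1/\Psi_n)$ from being of size $nT(r,f)$ (indeed the theorem itself, specialized to $\beta=0$, asserts $\lambda(\Psi_n)=\rho(f)$, so $\Psi_n$ has as many zeros as its order allows). Your Second Main Theorem inequality is symmetric in the values $0$ and $\beta$, and the structure of $\Psi_n$ gives no a priori reason to single out $0$. Finally, a Clunie-type lemma bounds proximity functions $m(r,\cdot)$, not counting functions $\overline{N}(r,\cdot)$, so invoking it here does not close the gap. The paper's argument avoids this obstacle entirely because the Clunie lemma is applied only after the Hadamard factorization has converted the problem into an algebraic identity $f^{n-1}G=H$.
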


In 2019, Laine \cite{Laine}  generalized the coefficients
from complex constants to $\lambda$-small functions,
released the assumption on $\beta$ that $g(f)\not\equiv \beta$, and obtained the following theorem.

\begin{lettertheorem}\label{thmAC0000}
Let $f$ be an entire function of finite order $\rho(f)$, $\alpha$,
$\beta$, $b_{0},\ldots, b_{k}$ be $\lambda$-small functions of $f$, $g(f):=\Sigma_{j=1}^{k}b_{j}(z)f(z+c_{j})$ be non-vanishing and
$n\geq 3$. Then for $\Psi_{n}:=g(f)-\alpha f^{n}$, $\Psi_{n}-\beta$ has
sufficiently many zeros to satisfy $\lambda(\Psi_{n}-\beta)=\rho(f)$.
\end{lettertheorem}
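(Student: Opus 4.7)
The plan is to argue by contradiction: assume $\lambda:=\lambda(\Psi_n-\beta)<\rho(f)$, so that $\overline{N}(r,1/(\Psi_n-\beta))=S_\lambda(r,f)$, and derive an incompatibility with $n\ge 3$. Under this assumption the logarithmic derivative $\Lambda:=(\Psi_n-\beta)'/(\Psi_n-\beta)$ satisfies $T(r,\Lambda)=S_\lambda(r,f)$, since $m(r,\Lambda)=S(r,f)$ by the classical logarithmic derivative lemma, while $N(r,\Lambda)\le\overline{N}(r,\Psi_n-\beta)+\overline{N}(r,1/(\Psi_n-\beta))=S_\lambda(r,f)$ thanks to the contradiction hypothesis and the fact that $f$ is entire with $\lambda$-small coefficients.

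The non-vanishing hypothesis on $g(f)$ enters decisively next. The Chiang--Feng / Halburd--Korhonen shift-quotient estimate applied termwise to $g(f)/f$ gives $m(r,g(f)/f)=S_\lambda(r,f)$, hence $T(r,g(f))\le T(r,f)+S_\lambda(r,f)$. Writing $g(f)=e^{H}$ for an entire $H$ and applying the classical logarithmic derivative lemma to $H'=g(f)'/g(f)$, the non-vanishing of $g(f)$ forces $N(r,H')=S_\lambda(r,f)$, and combined with $m(r,H')=S(r,g(f))=S(r,f)$ one obtains $T(r,H')=S_\lambda(r,f)$. Starting from $(\Psi_n-\beta)'=(\Psi_n-\beta)\Lambda$ and substituting $\Psi_n=g(f)-\alpha f^n$, a short rearrangement yields
\begin{equation*}
\alpha f^n\!\left[\tfrac{\alpha'}{\alpha}+n\tfrac{f'}{f}-\Lambda\right]
 \;=\; g(f)\bigl(H'-\Lambda\bigr)-\bigl(\beta'-\beta\Lambda\bigr).
\end{equation*}

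Let $K$ denote the bracket on the left. The right-hand side has characteristic at most $T(r,f)+S_\lambda(r,f)$, since every factor besides $g(f)$ is $S_\lambda(r,f)$. If $K\not\equiv 0$, then $T(r,\alpha f^n K)\ge T(r,\alpha f^n)-T(r,1/K)=nT(r,f)+S_\lambda(r,f)$, yielding $(n-1)T(r,f)\le S_\lambda(r,f)$, which contradicts $\rho(f)>\lambda$ already for $n\ge 2$. The main remaining obstacle is the degenerate case $K\equiv 0$: integrating gives $\alpha f^n\equiv C(\Psi_n-\beta)$ for a nonzero constant $C$, whence $g(f)\equiv\beta+\alpha f^n(C+1)/C$. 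For $C\ne -1$ the right-hand side has characteristic $nT(r,f)+S_\lambda(r,f)$ while the left satisfies $T(r,g(f))\le T(r,f)+S_\lambda(r,f)$, again a contradiction for $n\ge 2$. The residual sub-case $C=-1$ forces $g(f)\equiv\beta$, which must be excluded via the non-vanishing of $g(f)$ together with an auxiliary argument exploiting the smallness of $\beta$ and the finite order of $f$. Combining with the trivial reverse inequality $\lambda\le\rho(\Psi_n-\beta)=\rho(f)$ then yields the asserted equality $\lambda(\Psi_n-\beta)=\rho(f)$.
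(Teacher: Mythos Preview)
Your overall architecture matches the paper's proof of its Theorem~1.1 (which contains this statement as the special case $k_j\equiv 0$): assume $\lambda(\Psi_n-\beta)<\rho$, differentiate the relation $\Psi_n-\beta=\pi e^{g}$, and split into the cases $K\equiv 0$ versus $K\not\equiv 0$. Indeed, the paper's $G(z,f)$ equals $-\alpha f\,K$, so the dichotomy is literally the same; for $K\not\equiv 0$ the paper invokes a Clunie-type lemma on $f^{n-1}G=H$ rather than your direct characteristic inequality.

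There are, however, three genuine problems. First, ``non-vanishing'' in the hypothesis means $g(f)\not\equiv 0$, not that $g(f)$ is zero-free; you cannot write $g(f)=e^{H}$ with $H$ entire. This is easily patched, since what you actually need on the right is $g(f)'-g(f)\Lambda$, and $m\bigl(r,(g(f)'-g(f)\Lambda)/f\bigr)=S_\lambda(r,f)$ gives the desired bound without $H$. Second, your assertion that $T(r,1/K)=S_\lambda(r,f)$ is unjustified: $K$ contains $nf'/f$, whose poles at the zeros of $f$ contribute $\overline N(r,1/f)$ to $N(r,K)$, and nothing in the hypotheses makes this small. One can still close the argument, since $T(r,K)\le T(r,f)+S_\lambda(r,f)$ suffices for $n\ge 3$, but your stated inequality is wrong as written. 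The paper sidesteps this entirely via the Clunie lemma.

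Third, and most seriously, the residual sub-case $C=-1$ (equivalently $g(f)\equiv\beta$) is exactly the delicate point that the paper is written to address: it remarks explicitly that Laine's original proof of this theorem was incomplete at $G\equiv 0$. Your ``auxiliary argument exploiting the smallness of $\beta$'' is not an argument; note that the non-vanishing hypothesis only excludes $g(f)\equiv\beta$ when $\beta\equiv 0$. The paper's treatment is substantive: from $\alpha f^{n}=-(\Psi_n-\beta)=-\pi e^{g}$ one obtains $f=(-\pi/\alpha)^{1/n}e^{g/n}$, substitutes this into $g(f)\equiv\beta$, and uses growth of exponential polynomials together with $S_\lambda(r,f)\subset S(r,e^{g})$ to reach a contradiction. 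Without supplying this step, your proposal has the same gap the paper set out to fix.
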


But a bit regret, the proof of dealing with $G(z,f)\equiv 0$ in  \cite[Theorem 5.1]{Laine}  is not  complete.


We now introduce the generalized linear delay-differential operator of
$f(z)$,
\begin{eqnarray}\label{thmABABnew2eq1add1}
  L(z,f)=\sum_{j=0}^{m}b_{j}(z)f^{(k_{j})}(z+c_{j}),
\end{eqnarray}
where $b_{j}$ are  $\lambda$-small functions of $f$, $c_{j}$ are distinct
complex numbers and $k_{j}$ are non-negative integers. In view of the
above theorems, it is quiet natural to study the value distribution of
 $\Psi_{n}-\beta$ when the linear difference operator $g(f)$ is changed to
 the linear delay-differential operator $L(z,f)$ with the restriction
 on $\beta$ be omitted?

In this paper, we study the above problem and obtain the following result.
\begin{theorem}\label{thmABABnew2}
Let $f(z)$ be an entire function of finite order $\rho$, $\alpha (\not\equiv 0)$, $\beta$ be $\lambda$-small functions of $f$, $L(z,f)$ be non-vanishing  linear  delay-differential polynomial defined as in \eqref{thmABABnew2eq1add1} and
$n\geq 3$. Then for $\Phi_{n}=L(z,f)-\alpha f^{n}$, we have $\Phi_{n}-\beta$  has sufficiently many zeros to satisfy $\lambda(\Phi_{n}-\beta)=\rho(f)$.
\end{theorem}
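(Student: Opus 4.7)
Proof plan.

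My plan is to apply Nevanlinna's second main theorem to the entire function $\Phi_n = L - \alpha f^n$ with the small moving targets $0$ and $\beta$, complemented by an auxiliary-function analysis that bounds $\overline{N}(r, 1/\Phi_n)$ from above. First, writing each term of $L$ as $b_j(z) f(z) \cdot f^{(k_j)}(z+c_j)/f(z)$ and combining the Chiang--Feng and Halburd--Korhonen logarithmic-difference lemmas with the classical logarithmic-derivative lemma, one obtains $m(r, L/f) = S_\lambda(r, f)$, hence $T(r, L) \leq T(r, f) + S_\lambda(r, f)$. Together with $\alpha f^n = L - \Phi_n$ and a Valiron--Mohon'ko type argument this yields $T(r, \Phi_n) = nT(r, f) + S_\lambda(r, f)$, so $\Phi_n$ has order exactly $\rho(f)$.

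Applying the second main theorem to the entire function $\Phi_n$ with small moving targets $0$ and $\beta$ gives
\[
nT(r, f) + S_\lambda(r, f) \;\leq\; \overline{N}\!\left(r, \tfrac{1}{\Phi_n}\right) + \overline{N}\!\left(r, \tfrac{1}{\Phi_n - \beta}\right) + S(r, f).
\]
To conclude $\lambda(\Phi_n - \beta) = \rho(f)$ it suffices to prove $\overline{N}(r, 1/\Phi_n) \leq (n-1) T(r, f) + S_\lambda(r, f)$: the display then forces $\overline{N}(r, 1/(\Phi_n - \beta)) \geq T(r, f) - S_\lambda(r, f)$, so $\lambda(\Phi_n - \beta) \geq \rho(f)$, while the reverse inequality $\lambda(\Phi_n - \beta) \leq \rho(\Phi_n - \beta) \leq \rho(f)$ is immediate.

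The bound on $\overline{N}(r, 1/\Phi_n)$ is approached via the auxiliary meromorphic function
\[
G(z, f) \;:=\; \frac{(\Phi_n - \beta)'}{\Phi_n - \beta} - \frac{\alpha'}{\alpha} - n\,\frac{f'}{f} \;=\; \frac{(\Phi_n - \beta)'}{\Phi_n - \beta} - \frac{(\alpha f^n)'}{\alpha f^n}.
\]
The logarithmic-derivative lemma gives $m(r, G) = S(r, f) + S_\lambda(r, f)$, and the poles of $G$ are simple and confined to zeros of $\Phi_n - \beta$, zeros of $f$, and zeros or poles of $\alpha$. From $\alpha f^n = L - \beta - (\Phi_n - \beta)$ one derives by a direct Wronskian-type computation
\[
(\Phi_n - \beta)\,G\,\alpha f^n \;=\; (L - \beta)(\Phi_n - \beta)' - (L - \beta)'(\Phi_n - \beta),
\]
which, when $G \not\equiv 0$, expresses $\Phi_n$ as a rational combination of $G$, $L$, $\Phi_n - \beta$ and small functions. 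A careful Nevanlinna bookkeeping for this expression, together with $T(r, L) \leq T(r, f) + S_\lambda(r, f)$, then yields the required upper bound $\overline{N}(r, 1/\Phi_n) \leq (n-1)T(r, f) + S_\lambda(r, f)$.

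The main obstacle, and precisely the point at which Laine's proof is incomplete, is the degenerate case $G(z, f) \equiv 0$. In that case integration gives $(\Phi_n - \beta)/(\alpha f^n) \equiv c$ for some complex constant $c$, so $L \equiv \beta + (1+c)\alpha f^n$. If $c \neq -1$, comparing $T(r, L) \leq T(r, f) + S_\lambda(r, f)$ with $T(r, (1+c)\alpha f^n) = nT(r, f) + S_\lambda(r, f)$ produces a contradiction for $n \geq 2$. The remaining subcase $c = -1$ forces $L \equiv \beta$; I plan to close the gap here by observing that $L \equiv \beta$ combined with $m(r, L/f) = S_\lambda(r, f)$ established above forces $m(r, \beta/f) = S_\lambda(r, f)$, hence $m(r, 1/f) = S_\lambda(r, f)$. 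This in turn yields $T(r, f) = N(r, 1/f) + S_\lambda(r, f)$, so $\lambda(f) = \rho(f)$. Since $L \equiv \beta$ implies $\Phi_n - \beta = -\alpha f^n$, the zeros of $\Phi_n - \beta$ coincide (apart from the $S_\lambda$ zeros of $\alpha$) with the zeros of $f$, and one concludes $\lambda(\Phi_n - \beta) = \lambda(f) = \rho(f)$ directly, completing the proof in all cases.
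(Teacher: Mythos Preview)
Your treatment of the degenerate subcase $L\equiv\beta$ is clean and genuinely simpler than the paper's: instead of writing $f=(-\pi/\alpha)^{1/n}e^{g/n}$ and invoking exponential--polynomial growth estimates, you observe that $m(r,L/f)=S_{\lambda}(r,f)$ forces $m(r,1/f)=S_{\lambda}(r,f)$, hence $N(r,1/f)=T(r,f)+S_{\lambda}(r,f)$ and $\lambda(\Phi_n-\beta)=\lambda(-\alpha f^n)=\lambda(f)=\rho$. That part is fine and worth keeping.

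The main case $G\not\equiv 0$, however, does not go through as written. Two problems compound each other. First, the equality $T(r,\Phi_n)=nT(r,f)+S_{\lambda}(r,f)$ that you feed into the second main theorem is not what Valiron--Mohon'ko gives: from $\alpha f^n=L-\Phi_n$ and $T(r,L)\le T(r,f)+S_{\lambda}(r,f)$ one only obtains $(n-1)T(r,f)-S_{\lambda}(r,f)\le T(r,\Phi_n)\le (n+1)T(r,f)+S_{\lambda}(r,f)$, exactly as in the paper's Lemma~\ref{lemma000}. Second, the promised bound $\overline{N}(r,1/\Phi_n)\le (n-1)T(r,f)+S_{\lambda}(r,f)$ is never derived; the Wronskian identity you write down expresses $\alpha f^n G$ in terms of $L-\beta$, $\Phi_n-\beta$ and their derivatives, but the poles of $G$ sit precisely at the zeros of $\Phi_n-\beta$ and of $f$, so any $T(r,1/G)$ or $N(r,G)$ estimate already involves the very quantity $\overline N(r,1/(\Phi_n-\beta))$ you are trying to bound from below. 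Even granting your asserted bound, combining it with the correct lower bound $(n-1)T(r,f)$ for $T(r,\Phi_n)$ yields only $\overline N(r,1/(\Phi_n-\beta))\ge -S_{\lambda}(r,f)$, which is vacuous. (There is also the side issue that the two--target second main theorem needs $\beta\not\equiv 0$.)

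The paper avoids all of this by arguing by contradiction: assuming $\lambda(\Phi_n-\beta)<\rho$, the Hadamard factorisation $\Phi_n-\beta=\pi e^{g}$ turns $(\Phi_n-\beta)'/(\Phi_n-\beta)$ into the \emph{small} function $\pi'/\pi+g'$, so that the relation $f^{n-1}G=H$ has delay--differential polynomial right--hand side of total degree $\le 1$ with genuinely small coefficients, and the Clunie--type Lemma~\ref{lemma24} forces $T(r,f)=O(r^{\rho-1+\varepsilon})+S_{\lambda}(r,f)$. Without the contradiction hypothesis you lose exactly this smallness, and the Clunie mechanism is unavailable. If you want to salvage a direct argument, note that your Wronskian identity does give $nT(r,f)\le T(r,\alpha f^n G)+T(r,1/G)+S_\lambda(r,f)\le 2T(r,f)+2\,\overline N(r,1/(\Phi_n-\beta))+S_{\lambda}(r,f)$, which already yields $\overline N(r,1/(\Phi_n-\beta))\ge\tfrac{n-2}{2}T(r,f)-S_{\lambda}(r,f)$ for $n\ge 3$ without any appeal to the second main theorem; rewriting the non--degenerate case along these lines would close the gap.
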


\begin{remark}
We omit the restriction on $\beta$ is meaningful. In fact,
we do not need to worry about that if $L(z,f)\equiv \beta$ and
$f$ has a Borel exceptional value $0$, then $\lambda(\Phi_{n}-\beta)
=\lambda(-\alpha f^{n})$ may less than $\rho$. It is because from the proof of Therorem~\ref{thmABABnew2},
we can get that if  $0$ is a Borel exceptional value of $f$, then
$L(z,f)\not\equiv \beta$. So $L(z,f)\equiv \beta$ and
$f$ has a Borel exceptional value $0$ can not hold simultaneously.
\end{remark}

Chen  \cite{chen2} also considered the value distribution of  $\Psi_{2}$ when $n=2$ and obtained the following Theorems~\ref{thmAK} and ~\ref{thmAL}.

\begin{lettertheorem}\label{thmAK}
Let $f(z)$ be a transcendental entire function of finite order with
a Borel exceptional value $0$, and let $\alpha,c\in\mathbb{C}\setminus \{0\}$
be constants, with $c$ such that $f(z+c)\not\equiv f(z)$. Then $\Psi_{2}(z)$
assumes all finite values infinitely often, and for every $\beta\in \mathbb{C}$
we have $\lambda(\Psi_{2}-\beta)=\sigma(f)$.
\end{lettertheorem}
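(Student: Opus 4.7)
The plan is to exploit the Borel exceptional‑value hypothesis to Hadamard‑factorize $f = h\, e^{g}$, rewrite $\Psi_{2} - \beta$ as a quadratic expression in $e^{g}$ with coefficients of order strictly less than $\rho$, and then apply Nevanlinna's second main theorem to force $\Psi_{2} - \beta$ to have many zeros. Since $\lambda(f) < \sigma(f) = \rho$, Hadamard's factorization forces $\rho$ to be a positive integer and gives $f(z) = h(z)\, e^{g(z)}$ with $h$ entire, $\sigma(h) = \lambda(f) < \rho$, and $g$ a polynomial of degree $\rho$. Letting $B(z) := g(z+c) - g(z)$ (a polynomial of degree $\rho - 1$) and $D(z) := h(z+c)\, e^{B(z)} - h(z)$, a direct computation gives
\[
\Psi_{2}(z) = D(z)\, e^{g(z)} - \alpha\, h(z)^{2}\, e^{2g(z)}, \qquad \sigma(D) < \rho.
\]
Moreover $D \not\equiv 0$: indeed $D \equiv 0$ rearranges to $h(z+c)\, e^{g(z+c)} \equiv h(z)\, e^{g(z)}$, i.e., $f(z+c) \equiv f(z)$, contradicting the hypothesis on $c$.

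Fix $\beta \in \mathbb{C}$, set $K := \Psi_{2} - \beta$, and introduce the auxiliary entire function
\[
v(z) := 2\alpha\, h(z)^{2}\, e^{g(z)} - D(z).
\]
A routine expansion yields the quadratic identity
\[
v^{2} - R = -4\alpha\, h^{2}\, K, \qquad R := D^{2} - 4\alpha\, h^{2}\, \beta,
\]
with $\sigma(R) < \rho$. Since $2\alpha h^{2}$ and $D$ are small relative to $e^{g}$, one also has $T(r, v) = T(r, e^{g}) + S_{\lambda}(r, f) \sim c\, r^{\rho}$. Apply the second main theorem (small‑function version) to the entire function $v^{2}$ with targets $0$, $\infty$, $R$:
\[
T(r, v^{2}) \leq \bar N(r, v^{2}) + \bar N(r, 1/v^{2}) + \bar N(r, 1/(v^{2} - R)) + S(r, v^{2}).
\]
Since $v^{2}$ is entire, $\bar N(r, v^{2}) = 0$; since $\bar N(r, 1/v^{2}) = \bar N(r, 1/v) \leq T(r, v) + O(1)$ and $T(r, v^{2}) = 2 T(r, v) + O(1)$, this yields $\bar N(r, 1/(v^{2} - R)) \geq T(r, v) - S(r, v)$. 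Because $v^{2} - R = -4\alpha h^{2} K$ and $\bar N(r, 1/h) = O(r^{\lambda(f) + \varepsilon}) = S_{\lambda}(r, f)$, the zeros transfer to give
\[
\bar N(r, 1/K) \geq T(r, v) - S_{\lambda}(r, f) \sim c\, r^{\rho},
\]
hence $\lambda(\Psi_{2} - \beta) = \rho = \sigma(f)$, as required.

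The main obstacle is the degenerate case $R \equiv 0$ (equivalently $D^{2} \equiv 4\alpha\, h^{2}\, \beta$), when the three SMT targets collapse to two and the estimate above is vacuous. In this case the identity reduces to $v^{2} = -4\alpha\, h^{2}\, K$, so it suffices to show $\lambda(v) = \rho$. This follows by applying the second main theorem directly to $e^{g}$ against the small target $D/(2\alpha h^{2})$: since $0$ and $\infty$ are Picard values of $e^{g}$, one has $T(r, e^{g}) \leq \bar N(r, 1/(e^{g} - D/(2\alpha h^{2}))) + S(r, e^{g})$, and outside the zero set of $h$ the zeros of $e^{g} - D/(2\alpha h^{2})$ coincide with those of $v = 2\alpha h^{2} e^{g} - D$. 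Hence $\bar N(r, 1/v) \geq T(r, e^{g}) - S_{\lambda}(r, f) \sim c\, r^{\rho}$, giving $\lambda(v^{2}) = \rho$ and therefore $\lambda(\Psi_{2} - \beta) = \rho$, completing the argument.
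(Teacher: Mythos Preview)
Your argument is correct, but it follows a different route from the paper's own proof (given for the generalization in Theorem~\ref{thmABABnew7}). After the common Hadamard step $f=h\,e^{g}$ and the reduction
\[
\Psi_{2}-\beta \;=\; D\,e^{g}-\alpha h^{2}e^{2g}-\beta,
\]
the paper splits on whether $\beta\equiv 0$. When $\beta\not\equiv 0$ it argues by contradiction: assuming $\lambda(\Psi_{2}-\beta)<\rho$ yields a second Hadamard factorization $\Psi_{2}-\beta=h^{*}e^{bz^{\rho}}$, and the Borel--type Lemma~\ref{lemma21} applied to the resulting exponential identity forces $\beta=0$. When $\beta\equiv 0$ it factors $\Psi_{2}=\alpha h^{2}e^{g}\bigl(D/(\alpha h^{2})-e^{g}\bigr)$ and applies the second main theorem to $e^{g}$ against the small target $D/(\alpha h^{2})$, obtaining in fact the sharp equality $N(r,1/\Psi_{2})=T(r,f)+S_{\lambda}(r,f)$.

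Your approach instead completes the square once and for all, rewriting everything via $v=2\alpha h^{2}e^{g}-D$ and the identity $v^{2}-R=-4\alpha h^{2}(\Psi_{2}-\beta)$, and then runs the second main theorem on $v^{2}$ with targets $0,\infty,R$. This is more uniform: it treats every $\beta$ at once and dispenses with Lemma~\ref{lemma21} entirely, the only residual case being $R\equiv 0$ (which forces $\beta\neq 0$ and which you dispatch by exactly the same second-main-theorem estimate on $e^{g}$ that the paper uses for $\beta=0$). The price is a mild loss of sharpness: because you discard half of $T(r,v^{2})$ via $\bar N(r,1/v^{2})=\bar N(r,1/v)\le T(r,v)$, you obtain $\bar N(r,1/(\Psi_{2}-\beta))\ge T(r,f)(1+o(1))$ rather than the exact asymptotic the paper records in the $\beta=0$ case; for the stated conclusion $\lambda(\Psi_{2}-\beta)=\sigma(f)$ this makes no difference.
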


\begin{lettertheorem}\label{thmAL}
Let $f(z)$ be a transcendental entire function of finite order with
a finite nonzero Borel exceptional value $d$,  and let $\alpha,c\in\mathbb{C}\setminus \{0\}$ be constants, with $c$ such that $f(z+c)\not\equiv f(z)$. Then  for every $\beta\in \mathbb{C}$ with $\beta\neq-\alpha d^{2}$, $\Psi_{2}(z)$ assumes the value $\beta$ infinitely often, and $\lambda(\Psi_{2}-\beta)=\sigma(f)$.
\end{lettertheorem}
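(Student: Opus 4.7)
The plan is to argue by contradiction. Assume $\lambda(\Psi_{2}-\beta)<\rho$, where $\rho:=\sigma(f)$, and exploit Hadamard factorization together with Borel's lemma for exponential sums to derive a contradiction. Since $d\neq 0$ is a finite Borel exceptional value of $f$, the canonical factorization yields $f(z)=d+g(z)e^{P(z)}$ where $P$ is a polynomial of degree exactly $\rho$ (so $\rho\in\mathbb{Z}_{\ge 1}$) and $g$ is entire with $\sigma(g)=\lambda(f-d)<\rho$. The hypothesis $\beta\neq -\alpha d^{2}$ gives $A:=-(\alpha d^{2}+\beta)\neq 0$, and direct expansion of $\Psi_{2}-\beta=f(z+c)-f(z)-\alpha f(z)^{2}-\beta$ yields the key identity
\begin{equation*}
  \Psi_{2}-\beta+\alpha g^{2}\,e^{2P(z)}-h(z)\,e^{P(z)}-A=0,
\end{equation*}
where $h(z):=g(z+c)e^{P(z+c)-P(z)}-(1+2\alpha d)g(z)$. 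Because $P(z+c)-P(z)$ is a polynomial of degree at most $\rho-1$, the coefficient $h$ has order strictly less than $\rho$, and so do $\alpha g^{2}$ and $A$.

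Under the contradiction hypothesis, $F:=\Psi_{2}-\beta$ is entire with $\sigma(F)\le\rho$ and $\lambda(F)<\rho$, so Hadamard factorization gives $F=p(z)e^{R(z)}$ with $\sigma(p)=\lambda(F)<\rho$ and $R$ a polynomial of degree at most $\rho$. Substituting reduces the problem to the four-term exponential identity
\begin{equation*}
  p\,e^{R}+\alpha g^{2}\,e^{2P}-h\,e^{P}-A=0,
\end{equation*}
in which the exponents are $R,2P,P,0$ and every coefficient has order strictly less than $\rho$.

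The crux is a case analysis driven by how the leading coefficient of $R$ compares to $a_{\rho}$, the leading coefficient of $P$. If $\deg(R-2P)<\rho$, write $R=2P+S$ and $e^{R}=e^{S}e^{2P}$ to absorb $pe^{S}$ (still of order $<\rho$) into the coefficient of $e^{2P}$; the resulting three-term identity over exponents $\{2P,P,0\}$, whose pairwise differences all have degree $\rho$, forces every coefficient to vanish by the generalized Borel lemma, giving in particular $A\equiv 0$, a contradiction. The case $\deg(R-P)<\rho$ is treated analogously and forces both $A\equiv 0$ and $\alpha g^{2}\equiv 0$; the latter would force $f\equiv d$, contradicting transcendence. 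If $\deg R<\rho$, then $pe^{R}$ merges into the constant term and Borel again forces $\alpha g^{2}\equiv 0$. Finally, when $\deg R=\rho$ and both $R-2P$ and $R-P$ have degree $\rho$, the four exponents $R,2P,P,0$ are pairwise separated by polynomials of degree $\rho$ and the four-term Borel lemma forces each of $p,\alpha g^{2},h,A$ to vanish, again contradicting $A\neq 0$.

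The main obstacle is the bookkeeping in the case analysis: in each subcase one must verify that, after absorbing an appropriate polynomial shift $e^{S}$ into a coefficient, the resulting coefficient still has order strictly less than $\rho$, so that the Borel-type lemma applies with its required comparison between coefficient orders and the degree-$\rho$ exponent differences. The fact that $P$ has degree exactly $\rho$ with $a_{\rho}\neq 0$ is essential in ruling out degenerate coincidences among $\{R,2P,P,0\}$. Once the case analysis is complete, the contradictions collectively force $\lambda(\Psi_{2}-\beta)\ge\rho$, and the trivial bound $\lambda(\Psi_{2}-\beta)\le\sigma(\Psi_{2}-\beta)\le\sigma(f)=\rho$ yields the claimed equality $\lambda(\Psi_{2}-\beta)=\sigma(f)$.
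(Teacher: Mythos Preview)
Your proposal is correct and follows essentially the same route as the paper, which obtains this statement as the special case $L(z,f)=f(z+c)-f(z)$ of its Theorem~\ref{thmABABnew4}(i): factor $f=d+h\,e^{az^{\rho}}$ via Hadamard, assume $\lambda(\Psi_{2}-\beta)<\rho$ to write $\Psi_{2}-\beta=h^{*}e^{bz^{\rho}}$, expand into an exponential-sum identity, and apply the Borel-type Lemma~\ref{lemma21} through a case split on $b$ to force the constant term $-\alpha d^{2}-\beta$ to vanish. The only cosmetic difference is that the paper normalizes the exponents to monomials $az^{\rho}$, $bz^{\rho}$ (absorbing lower-degree terms into $h$, $h^{*}$) and establishes $\sigma(\Psi_{2}-\beta)=\rho$ beforehand via Lemma~\ref{lemma000}, so your case $\deg R<\rho$ is ruled out up front rather than inside the case analysis.
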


Liu and Yi \cite{Liu-Yi} replaced $\Delta f(z)$ in Theorems~\ref{thmAK}
and \ref{thmAK} to a more general linear difference operator $\sum_{j=1}^{k} a_{j}(z)f(z+c_{j})$ and
obtained the following result.
\begin{lettertheorem}\label{thmBZ}
Suppose that $f(z)$ be a finite order transcendental entire function with
a Borel exceptional value $d$. Let $\beta(z), \alpha(z)(\not\equiv 0), a_{j}(z)(j=1,2,\ldots,k)$ be polynomials, and let $c_{j}(j=1,2,\ldots,k)$
be complex constants. If either $d=0$ and  $\sum_{j=1}^{k} a_{j}(z)f(z+c_{j})\not\equiv 0$, or  $d\neq 0$ and  $\sum_{j=1}^{k} d a_{j}(z)-d^{2}\alpha(z)-\beta(z)\not\equiv 0$, then $\Psi_{2}(z)-\beta(z)=
\sum_{j=1}^{k}a_{j}(z)f(z+c_{j})-\alpha(z)f(z)^{2}-\beta(z)$ has infinitely many
zeros and $\lambda(\Psi_{2}-\beta)=\rho(f)$.
\end{lettertheorem}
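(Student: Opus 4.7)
The plan is to reduce $F:=\Psi_2-\beta$ to a quadratic expression in $u=e^{P}$ with small coefficients via the Hadamard factorization of $f$, and then to rule out $\lambda(F)<\rho$ by a Borel lemma on sums of exponentials.

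Since $f$ is transcendental entire of finite order $\rho$ with Borel exceptional value $d$, a Hadamard factorization gives $f(z)=d+h(z)e^{P(z)}$ with $h$ entire of order $\rho(h)<\rho$ and $P$ a polynomial of degree exactly $\rho\ge 1$. Setting $H_j(z):=h(z+c_j)\,e^{P(z+c_j)-P(z)}$, each $H_j$ has order at most $\max(\rho(h),\rho-1)<\rho$ because $P(z+c_j)-P(z)$ is a polynomial of degree $\rho-1$. Hence $f(z+c_j)=d+H_j(z)e^{P(z)}$. Substituting and collecting powers of $e^{P}$ yields
\begin{equation*}
F=\gamma_0+\gamma_1 e^{P}+\gamma_2 e^{2P},
\end{equation*}
with $\gamma_0=d\sum_{j=1}^{k}a_j-\alpha d^{2}-\beta$, $\gamma_1=\sum_{j=1}^{k}a_j H_j-2d\alpha h$, and $\gamma_2=-\alpha h^{2}$; all three have order strictly less than $\rho$. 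Since $\alpha\not\equiv 0$ and $h\not\equiv 0$ (otherwise $f\equiv d$ would be constant), we have $\gamma_2\not\equiv 0$, and consequently $\rho(F)=\rho$. The hypothesis now gives exactly one of: (i) $d\neq 0$ and $\gamma_0\not\equiv 0$; or (ii) $d=0$ and $\gamma_1=\sum_{j=1}^{k}a_j H_j\not\equiv 0$ (equivalent to $\sum_{j=1}^{k}a_j(z)f(z+c_j)\not\equiv 0$, since $e^{P}\ne 0$).

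Assume for contradiction $\lambda(F)<\rho$; by Hadamard applied to $F$, write $F=Re^{Q}$ with $\rho(R)<\rho$ and $\deg Q=\rho$, so
\begin{equation*}
\gamma_0+\gamma_1 e^{P}+\gamma_2 e^{2P}-Re^{Q}=0.
\end{equation*}
I split into three cases according to the degrees of $P-Q$ and $2P-Q$, which cannot both drop below $\rho$ since $(2P-Q)-(P-Q)=P$ has degree $\rho$. When both degrees equal $\rho$, the four frequencies $0,P,2P,Q$ have pairwise differences that are non-constant polynomials of degree $\rho$, and Borel's theorem with coefficients of order $<\rho$ forces every coefficient, in particular $\gamma_2$, to vanish, a contradiction. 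When $\deg(P-Q)<\rho$, write $Q=P-p$ with $\deg p<\rho$, absorb $Re^{-p}$ into the $e^{P}$ coefficient, and apply Borel to the three frequencies $0,P,2P$: again $\gamma_2\equiv 0$, contradiction. When $\deg(2P-Q)<\rho$, write $Q=2P-q$ with $\deg q<\rho$, absorb $Re^{-q}$ into the $e^{2P}$ coefficient, and Borel forces $\gamma_0\equiv 0$ and $\gamma_1\equiv 0$; this contradicts hypothesis~(i) if $d\neq 0$, and hypothesis~(ii) if $d=0$.

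The main obstacle is precisely the last case, where the Hadamard exponent $Q$ of $F$ nearly coincides with $2P$: only the specific dichotomy on $d$ kills the coefficient configuration that would otherwise be consistent with $\lambda(F)<\rho$. The absorption trick is essential because a direct four-term application of Borel's theorem fails whenever two frequencies differ by a polynomial of degree $<\rho$; the $d=0$ versus $d\neq 0$ split in the hypothesis is tailored exactly to supply the missing contradiction in these degenerate subcases.
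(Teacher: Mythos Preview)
Your proof is correct and follows essentially the same route as the paper's own proofs of Theorems~\ref{thmABABnew4}(i) and~\ref{thmABABnew7}, which the paper explicitly notes recover Theorem~\ref{thmBZ} as a special case: Hadamard factorization $f=d+he^{P}$, expansion of $\Psi_2-\beta$ as a quadratic in $e^{P}$ with coefficients of order $<\rho$, a second Hadamard factorization of $\Psi_2-\beta$ under the contrary assumption $\lambda(\Psi_2-\beta)<\rho$, and then the Borel-type Lemma~\ref{lemma21} across the three cases determined by how $Q$ relates to $P$ and $2P$. The only cosmetic differences are that the paper normalizes $P$ to the monomial $az^{\rho}$ (absorbing lower-order terms into $h$) and phrases the case split as $b\neq a,2a$; $b=a$; $b=2a$ rather than via $\deg(P-Q)$ and $\deg(2P-Q)$.
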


The following Example~\ref{examAA} shows that if the difference operator $\Delta f(z)=f(z+c)-f(z)$ or  $\sum_{j=1}^{k}a_{j}(z)f(z+c_{j})$ in $\Psi_{2}$  is changed to a linear delay-differential operator $L(z,f)$,  the conclusions in Theorems~\ref{thmAL}
 and~\ref{thmBZ} may not hold.

\begin{example}\label{examAA}
Let $L(z,f)=f(z+1)-f'(z)$, and $\Phi_{2}=L(z,f)-\frac{e-1}{2} f(z)^{2}$.
For $f_{1}(z)=e^{z}+1$, we have $\Phi_{2}(f_{1})
=\frac{1-e}{2}e^{2z}+\frac{3-e}{2}$. Here, $d=1$, $\alpha=\frac{e-1}{2}$, $a_{1}=1$, $a_{2}=-1$,
and $\beta=\frac{3-e}{2}\neq \sum_{j=1}^{k} d a_{j}-\alpha d^{2}=-\frac{e-1}{2}$, but
$\Phi_{2}\neq \beta$.
\end{example}

So it is natural to ask:  what can we say about $\Phi_{2}=L(z,f)-\alpha f^{2}$ ?
The second aim of this paper is to consider the above problem, and obtain the following results.

Before stating Theorem~\ref{thmABABnew4}, we
recall that the Borel exceptional value for small function $\beta$ of $f(z)$
satisfies
\begin{eqnarray*}
  \lambda(f(z)-\beta)<\rho(f),
\end{eqnarray*}
where $\lambda(f-\beta)$ is the exponent of convergence of zeros of
$f-\beta$.

\begin{theorem}\label{thmABABnew4}
Let $f(z)$ be a transcendental entire function of finite order $\rho$ with a finite non-zero Borel exceptional value $d$.  Let $\alpha\in\mathbb{C}\setminus \{0\}$ be constant,  and $\beta, \, b_{j}\, (j=0,1,\ldots,m)$ be  $\lambda$-small entire functions of $f$.
Let $L(z,f)$ be non-vanishing  linear  delay-differential polynomial defined as in \eqref{thmABABnew2eq1add1}. Defining $\Phi_{2}=L(z,f)-\alpha f^{2}$, and $I_{1}=\{0\leq j\leq m: k_{j}=0\}$, we have the following statements:
\begin{itemize}
 \item [(i)]   If
\begin{eqnarray*}
\beta\not\equiv\left(\sum_{j\in I_{1}}b_{j}\right) d-\alpha d^{2},
\end{eqnarray*}
then $\Phi_{2}(z)-\beta$ has sufficiently many zeros to satisfy
$\lambda(\Phi_{2}-\beta)=\rho$.

 \item [(ii)] If
\begin{eqnarray}\label{introeq2}
\beta \equiv\left(\sum_{j\in I_{1}}b_{j}\right) d-\alpha d^{2},
\end{eqnarray}
 then one of the following holds:
\begin{itemize}
  \item [(a)] $\beta$ is a  Borel exceptional small function of $\Phi_{2}$, which satisfies
\begin{eqnarray}\label{introeq1aaa}
   \frac{\beta-\Phi_{2}}{(f-d)^{2}}=\alpha=\frac{ L(z,f)-\alpha d^{2}-\beta}{2d(f-d)}.
\end{eqnarray}
 \item [(b)] $\Phi_{2}-\beta$ has sufficiently many zeros to satisfy
\begin{eqnarray*}
 N\left(r,\frac{1}{\Phi_{2}-\beta}\right)=
T(r,f)+S_{\lambda}(r,f).
\end{eqnarray*}
\end{itemize}

\end{itemize}
\end{theorem}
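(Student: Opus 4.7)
Set $\gamma:=f-d$. Since $\gamma$ is entire with $\lambda(\gamma)<\rho(\gamma)=\rho$, Hadamard's factorization theorem forces $\rho$ to be a positive integer and yields $\gamma=G\,e^{P}$ with $G$ entire of order $\lambda(\gamma)<\rho$ and $P$ a polynomial of exact degree $\rho$. Because $P(z+c_{j})-P(z)$ has degree at most $\rho-1$ and $\rho(G)<\rho$, a short induction gives $\gamma^{(k_{j})}(z+c_{j})=\Lambda_{j}(z)\,e^{P(z)}$ with every $\Lambda_{j}$ entire of order $<\rho$. Put $B:=\sum_{j\in I_{1}}b_{j}$, $\widetilde M:=\sum_{j}b_{j}\Lambda_{j}$, $A:=dB-\alpha d^{2}-\beta$, and $C:=\widetilde M-2\alpha d\,G$. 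A direct substitution yields the fundamental identity
\begin{equation}\label{planeq}
\Phi_{2}-\beta = -\alpha\,G^{2}\,e^{2P} + C\,e^{P} + A,
\end{equation}
a quadratic in $e^{P}$ whose coefficients $-\alpha G^{2}$, $C$, $A$ are all entire of order $<\rho$.

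\paragraph{Part (i).} Assume $A\not\equiv 0$ and, for contradiction, $\lambda(\Phi_{2}-\beta)<\rho$. Hadamard gives $\Phi_{2}-\beta=\pi_{\psi}\,e^{P_{\psi}}$ with $\rho(\pi_{\psi})<\rho$ and $\deg P_{\psi}\le\rho$. Comparing with \eqref{planeq} produces
\[
\alpha G^{2}\,e^{2P}+\pi_{\psi}\,e^{P_{\psi}}-C\,e^{P}-A\equiv 0,
\]
a Borel-type linear relation with coefficients of order $<\rho$. Splitting according to whether the degree-$\rho$ leading coefficient of $P_{\psi}$ equals $2p$, $p$, or $0$ --- where $p$ is the leading coefficient of $P$ --- either the four exponentials $e^{2P}, e^{P_{\psi}}, e^{P}, 1$ are pairwise distinct at the top degree, or two of them collapse into a single one with a new $\lambda$-small coefficient; in every case the Borel lemma forces each surviving coefficient to vanish, which yields either $\alpha G^{2}\equiv 0$ or $A\equiv 0$. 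Both are impossible, so $\lambda(\Phi_{2}-\beta)=\rho$.

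\paragraph{Part (ii).} Here $A\equiv 0$, so \eqref{planeq} factors as $\Phi_{2}-\beta=e^{P}\,U$ with $U:=-\alpha G^{2}e^{P}+C$. Introduce $M:=L(z,\gamma)/\gamma-2\alpha d$; by the logarithmic-derivative and shift lemmas for finite-order entire functions, $m(r,M)=S(r,f)$, and since the only possible poles of $M$ are zeros of $\gamma$, also $N(r,M)\le N(r,1/\gamma)=S_{\lambda}(r,f)$, so $M$ is $\lambda$-small. One then has $\Phi_{2}-\beta=-\alpha\gamma(\gamma-M/\alpha)$. If $M\equiv 0$ (equivalently $C\equiv 0$, i.e.\ $L(z,\gamma)\equiv 2\alpha d\,\gamma$), then $\Phi_{2}-\beta\equiv -\alpha(f-d)^{2}$; together with $L(z,f)-\alpha d^{2}-\beta=A+L(z,\gamma)=2\alpha d(f-d)$ this yields both equalities in (ii)(a), and since $\lambda(\gamma)<\rho$, $\beta$ is Borel exceptional for $\Phi_{2}$. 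If $M\not\equiv 0$, the values $0$ and $M/\alpha$ are distinct small targets for the entire function $\gamma$, and the Second Main Theorem with small moving targets gives
\[
T(r,\gamma)\le \overline{N}(r,1/\gamma)+\overline{N}(r,1/(\gamma-M/\alpha))+S(r,\gamma).
\]
Common zeros of $\gamma$ and $\gamma-M/\alpha$ force $M=0$ and contribute $S_{\lambda}(r,f)$; the remaining zeros of $\gamma-M/\alpha$ are zeros of $\Phi_{2}-\beta$. Hence $T(r,f)\le N(r,1/(\Phi_{2}-\beta))+S_{\lambda}(r,f)$. Conversely, since $e^{P}$ has no zeros, $N(r,1/(\Phi_{2}-\beta))=N(r,1/U)\le T(r,U)+O(1)$, and the dominance of the exponential term in $U$ gives $T(r,U)=T(r,f)+S_{\lambda}(r,f)$. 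Combining the two bounds yields (ii)(b).

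\paragraph{Main obstacle.} The delicate technical point is the case analysis in Part (i). One must carefully track the leading $\rho$-degree term of $P_{\psi}$ against those of $2P$, $P$, and $0$, combining any exponentials whose top-degree coefficients coincide into a single exponential with a freshly $\lambda$-small coefficient, before Borel's lemma can be invoked cleanly. Ensuring that, in each of the resulting subcases, the forced identity actually collides with one of the standing hypotheses $\alpha\not\equiv 0$, $G\not\equiv 0$, or $A\not\equiv 0$ is what powers the entire argument.
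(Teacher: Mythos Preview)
Your proof is correct and follows essentially the same route as the paper: Hadamard factorization of $f-d$, expression of $\Phi_{2}-\beta$ as a quadratic in $e^{P}$ with coefficients of order $<\rho$, a Borel--lemma contradiction for part~(i), and the same split on whether the middle coefficient vanishes for part~(ii). The only cosmetic difference is in~(ii)(b): the paper applies the second main theorem directly to $e^{az^{\rho}}$ with the small target $\widetilde{\gamma}/(\alpha h^{2})$, whereas you apply it to $\gamma=f-d$ with the small target $M/\alpha$ produced via the logarithmic-derivative and shift lemmas; since $\gamma=G\,e^{P}$ with $G$ of order $<\rho$, the two computations agree up to $S_{\lambda}(r,f)$ terms.
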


\begin{remark}
In Example~\ref{examAA},
 $\sum_{j\in I_{1}}b_{j}=1$, $d=1$, $\alpha=\frac{e-1}{2}$, and  $\beta=\frac{3-e}{2}=\left(\sum_{j\in I_{1}}b_{j}\right) d-\alpha d^{2}$
is a  Borel exceptional value of $\Phi_{2}$, which also satisfies Eq.~\eqref{introeq1aaa}. Thus Example~\ref{examAA} above illustrates Theorem~\ref{thmABABnew4}.
\end{remark}

\begin{remark}
Let $L_{1}(z,f)=f(z+c)-f(z)$, then $\sum_{j\in I_{1}}b_{j}=0$.
Let $L_{2}(z,f)=\sum_{j=1}^{k} a_{j}(z)f(z+c_{j})$, then
$\sum_{j=1}^{k} a_{j}=\sum_{j\in I_{1}}a_{j}$. Thus by Theorem~\ref{thmABABnew4}\, (i), we can obtain the results in Theorems~\ref{thmAL} and~\ref{thmBZ} when $d\neq 0$.
Therefore Theorem~\ref{thmABABnew4} improves Theorems~\ref{thmAL}
and~\ref{thmBZ}.
\end{remark}

The following theorem deals with the case when $d=0$.

\begin{theorem}\label{thmABABnew7}
Let $f(z)$ be a transcendental entire function of finite order $\rho$ with
a Borel exceptional value $0$. Let $\alpha\in\mathbb{C}\setminus \{0\}$ be constant,  and $\beta, \, b_{j}\, (j=0,1,\ldots,m)$  be  $\lambda$-small entire functions of $f$.
Let $L(z,f)$  be non-vanishing  linear  delay-differential polynomial defined as in \eqref{thmABABnew2eq1add1}.  Defining $\Phi_{2}=L(z,f)-\alpha f^{2}$, then we have $\lambda(\Phi_{2}-\beta)=\rho$. Particularly, if $\beta \equiv 0$,
then
\begin{eqnarray}\label{thmABABnew7eq1}
 N\left(r,\frac{1}{\Phi_{2}}\right)=
T(r,f)+S_{\lambda}(r,f).
\end{eqnarray}

\end{theorem}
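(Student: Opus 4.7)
The plan is to combine Hadamard factorization with Nevanlinna's second fundamental theorem, applied first to $e^{Q}$ against a small target and then to $\Phi_{2}$ itself against $\beta$.

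First, the hypothesis that $0$ is a Borel exceptional value of $f$ gives $\lambda(f)<\rho$, which forces $\rho$ to be a positive integer and, by Hadamard factorization, $f=h(z)\,e^{Q(z)}$ with $\deg Q=\rho$ and $T(r,h)=S_{\lambda}(r,f)$. Expanding each summand of \eqref{thmABABnew2eq1add1}, writing $f^{(k_{j})}(z+c_{j})=H_{j}(z)\,e^{Q(z+c_{j})}$ and using that $Q(z+c_{j})-Q(z)$ has degree $\le\rho-1$, I will obtain
\[
L(z,f)=e^{Q(z)}A(z),\qquad T(r,A)=S_{\lambda}(r,f),
\]
with $A\not\equiv 0$ by the hypothesis $L(z,f)\not\equiv 0$. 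This makes
\[
\Phi_{2}=A\,e^{Q}-\alpha h^{2}e^{2Q}=-\alpha h^{2}e^{Q}\bigl(e^{Q}-A/(\alpha h^{2})\bigr).
\]

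For the particular case $\beta\equiv 0$, since $e^{Q}$ has neither zeros nor poles, its Nevanlinna deficiencies at $0$ and $\infty$ each equal $1$ and so saturate the defect relation; hence for the small target $\phi:=A/(\alpha h^{2})\not\equiv 0$,
\[
N\!\bigl(r,\,1/(e^{Q}-\phi)\bigr)=T(r,e^{Q})+S(r,e^{Q})=T(r,f)+S_{\lambda}(r,f).
\]
The zeros of $\Phi_{2}$ coincide with those of $e^{Q}-\phi$ up to an $S_{\lambda}(r,f)$-correction from zeros of $h$, proving \eqref{thmABABnew7eq1}. For general $\beta$, on any ray where $\mathrm{Re}\,Q(z)\to+\infty$ the term $\alpha h^{2}e^{2Q}$ dominates $A\,e^{Q}$, giving $T(r,\Phi_{2})=2\,T(r,f)+S_{\lambda}(r,f)$. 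Applying the second fundamental theorem to the entire function $\Phi_{2}$ with targets $0$ and $\beta$,
\[
T(r,\Phi_{2})\le N(r,1/\Phi_{2})+N(r,1/(\Phi_{2}-\beta))+S(r,\Phi_{2}),
\]
and substituting the previous identities yields $N(r,1/(\Phi_{2}-\beta))\ge T(r,f)+S_{\lambda}(r,f)$; combined with the trivial $\lambda(\Phi_{2}-\beta)\le\rho(\Phi_{2}-\beta)=\rho$, this will give $\lambda(\Phi_{2}-\beta)=\rho$.

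The main obstacle I anticipate is twofold: (i) confirming the equality $T(r,\Phi_{2})=2T(r,f)+S_{\lambda}(r,f)$ rather than merely an upper bound, which requires ruling out cancellation between the two summands of different order, and (ii) invoking the small-function form of the second fundamental theorem for $e^{Q}$ cleanly; the remainder reduces to bookkeeping of $S_{\lambda}(r,f)$ error terms through the shift and derivative operations defining $L(z,f)$.
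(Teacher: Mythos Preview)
Your treatment of the case $\beta\equiv 0$ coincides with the paper's: both factor $f=he^{Q}$, rewrite $\Phi_{2}=-\alpha h^{2}e^{Q}\bigl(e^{Q}-A/(\alpha h^{2})\bigr)$, and apply the second main theorem with small functions to $e^{Q}$ to obtain $N(r,1/\Phi_{2})=T(r,e^{Q})+S(r,e^{Q})=T(r,f)+S_{\lambda}(r,f)$.

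For $\beta\not\equiv 0$ the two arguments diverge. The paper argues by contradiction: assuming $\lambda(\Phi_{2}-\beta)<\rho$, it writes $\Phi_{2}=\beta+h^{*}e^{bz^{\rho}}$ via Hadamard factorization, obtains the identity
\[
\beta+h^{*}e^{bz^{\rho}}=\widetilde{\gamma}\,e^{az^{\rho}}-\alpha h^{2}e^{2az^{\rho}},
\]
and invokes the Borel-type lemma on exponential sums (Lemma~\ref{lemma21}) to force $\beta\equiv 0$. Your route instead recycles the $\beta=0$ estimate: you feed $N(r,1/\Phi_{2})=T(r,f)+S_{\lambda}(r,f)$ together with $T(r,\Phi_{2})=2T(r,f)+S_{\lambda}(r,f)$ into the second main theorem for $\Phi_{2}$ with the three targets $0,\beta,\infty$, which yields $N(r,1/(\Phi_{2}-\beta))\ge T(r,f)+S_{\lambda}(r,f)$ directly. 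This is a legitimate and somewhat more elementary alternative, since it avoids Lemma~\ref{lemma21} entirely; the price is that you must justify $T(r,\Phi_{2})=2T(r,f)+S_{\lambda}(r,f)$. Your ray-dominance heuristic is not a proof, but the identity follows cleanly from the Valiron--Mohon'ko theorem applied to $\Phi_{2}=-\alpha h^{2}w^{2}+Aw$ as a degree-two polynomial in $w=e^{Q}$ with small coefficients (the leading coefficient $-\alpha h^{2}$ is non-vanishing). You should also note, as the paper does in the analogous place, that $S_{\lambda}(r,f)\subseteq S(r,e^{Q})$ because $T(r,e^{Q})\sim c\,r^{\rho}$; this is what guarantees that $\beta$ and $\phi=A/(\alpha h^{2})$ are genuinely small relative to $\Phi_{2}$ and $e^{Q}$, so that the small-function second main theorem applies.
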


\begin{remark}
 The following example
shows that when $\beta\equiv0$, Eq.\eqref{thmABABnew7eq1} in  Theorem~\ref{thmABABnew7} occurs.
\end{remark}

\begin{example}\label{examAD}
Let $L(z,f)=e^{-2}f(z+1)-\frac{1}{2}f'(z)$, and $\Phi_{2}=L(z,f)-f(z)^{2}$.
For $f_{2}(z)=e^{z+1}$, we have $\Phi_{2}(f_{2})=(1-\frac{e}{2})e^{z}-e^{2}e^{2z}$. Here, $0$ is a Borel exceptional value of $f_{2}$, and $N\left(r,\frac{1}{\Phi_{2}}\right)=N\left(r, \frac{1}{1-\frac{e}{2}-e^{2}e^{z}}\right)=
T(r,f_{2})+S(r,f_{2})$.
\end{example}

\section{Preliminary Lemmas}

In this section, we collect the results that are needed for proving the main
results.

The following lemma plays an important role in
uniqueness problems of meromorphic functions.

\begin{lemma}[\cite{Yi1}]\label{lemma21}
 Let $f_{j}(z)\, (j=1,\ldots,n)\, (n\geq2)$ be meromorphic
functions, and let $g_{j}(z)\, (j=1,\ldots,n)$ be entire functions satisfying
\begin{itemize}
\item[(i)] $\sum _{j=1}^{n}f_{j}(z)e^{g_{j}(z)}\equiv 0;$
\item[(ii)]when $1\leq j < k \leq n,$ then $g_{j}(z)-g_{k}(z)$ is not a constant;
\item[(iii)] when $1\leq j \leq n, 1\leq h < k \leq n$, then
$$T(r,f_{j})=o \{T(r,e^{g_{h}-g_{k}})\} \quad (r\to \infty, r\not\in E),$$
where $E\subset(1,\infty)$ is of finite linear measure or logarithmic measure.
\end{itemize}
Then, $f_{j}(z)\equiv 0\, (j=1,\ldots ,n)$.
\end{lemma}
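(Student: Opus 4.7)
The plan is to prove this classical Borel-type identity by induction on the number of terms $n$. For the base case $n = 2$, the relation $f_1 e^{g_1} + f_2 e^{g_2} \equiv 0$ (with $f_2 \not\equiv 0$, else $f_1 \equiv 0$ trivially) gives $e^{g_2 - g_1} = -f_1/f_2$, so $T(r, e^{g_2 - g_1}) \leq T(r, f_1) + T(r, f_2) + O(1)$. By (iii) the right-hand side is $o(T(r, e^{g_2 - g_1}))$ off the exceptional set, which forces $T(r, e^{g_2 - g_1})$ to remain bounded; but then $g_2 - g_1$ would be constant, violating (ii). Hence $f_2 \equiv 0$, and back-substitution gives $f_1 \equiv 0$.

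For the inductive step, assume the claim for $n-1$ and consider $\sum_{j=1}^n f_j e^{g_j} \equiv 0$. Differentiating and subtracting $g_n'$ times the original equation eliminates the $e^{g_n}$-term, producing
\begin{equation*}
\sum_{j=1}^{n-1} F_j \, e^{g_j} \equiv 0, \qquad F_j := f_j' + f_j(g_j' - g_n').
\end{equation*}
If the induction hypothesis can be applied to this new relation, then $F_j \equiv 0$ for every $j$, which reads $(f_j e^{g_j - g_n})' \equiv 0$ and integrates to $f_j = C_j e^{g_n - g_j}$ with constants $C_j$. Substituting back yields $f_n \equiv -(C_1 + \cdots + C_{n-1})$. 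For any $j < n$ with $C_j \neq 0$ we would have $T(r, f_j) = T(r, e^{g_n - g_j}) + O(1)$, directly contradicting hypothesis (iii) for the pair $(j, n)$ (since $T(r, e^{g_n - g_j}) = T(r, e^{g_j - g_n}) + O(1)$). Hence all $C_j = 0$, giving $f_j \equiv 0$ for $j < n$ and then $f_n \equiv 0$.

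The main obstacle will be verifying that the modified coefficients $F_j$ satisfy hypothesis (iii) for the reduced set of exponents $g_1, \ldots, g_{n-1}$, so that the induction hypothesis actually applies. The logarithmic derivative lemma gives $m(r, f_j'/f_j) = S(r, f_j)$ and hence $T(r, f_j') \leq 2T(r, f_j) + S(r, f_j)$, and applied to the entire function $e^{g_j - g_n}$ gives $T(r, g_j' - g_n') = m\bigl(r, (e^{g_j - g_n})'/e^{g_j - g_n}\bigr) = S(r, e^{g_j - g_n})$. Combining yields $T(r, F_j) \leq 3T(r, f_j) + S(r, e^{g_j - g_n}) + O(1)$, and the technical work is to show that each summand is $o(T(r, e^{g_h - g_k}))$ for every $1 \leq h < k \leq n - 1$ outside an enlarged exceptional set of still-finite logarithmic measure, exploiting hypothesis (iii) for the original tuple (in particular against the difference $g_j - g_n$ itself, which was available in the original problem but is no longer a permitted difference in the reduced problem). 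Once this bookkeeping is done, the induction closes and all $f_j$ are forced to vanish.
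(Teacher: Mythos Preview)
The paper does not prove this lemma at all; it is quoted verbatim from Yang--Yi's monograph \cite{Yi1} and used as a black box. So there is no ``paper's own proof'' to compare against, and the relevant question is simply whether your sketch is correct.

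Your inductive step contains a genuine error. Differentiating $\sum_{j=1}^{n} f_j e^{g_j}\equiv 0$ and then subtracting $g_n'$ times the original relation gives
\[
\sum_{j=1}^{n}\bigl(f_j'+f_j(g_j'-g_n')\bigr)e^{g_j}\equiv 0,
\]
and the $j=n$ coefficient is $f_n'$, not $0$. Thus the $e^{g_n}$-term is \emph{not} eliminated unless $f_n$ happens to be constant. The standard fix is to subtract $\bigl(g_n'+f_n'/f_n\bigr)$ times the original relation instead (equivalently, divide through by $f_n e^{g_n}$ before differentiating); the new coefficients become
\[
F_j=f_j'-\frac{f_n'}{f_n}\,f_j+f_j(g_j'-g_n'),\qquad j=1,\dots,n-1,
\]
and $F_j\equiv 0$ now integrates to $f_j/f_n=C_j e^{g_n-g_j}$. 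With this correction your concluding argument (that any $C_j\neq 0$ would violate hypothesis~(iii)) goes through, after also absorbing the extra term $T(r,f_n'/f_n)\le 2T(r,f_n)+S(r,f_n)$ into the growth estimate for $F_j$. The ``technical bookkeeping'' you defer---showing $T(r,F_j)=o\bigl(T(r,e^{g_h-g_k})\bigr)$ for all $1\le h<k\le n-1$, in particular handling the quantity $S(r,e^{g_j-g_n})$ which compares against a pair no longer present in the reduced system---is where the real content of the lemma lies; it is not automatic and is the reason textbook proofs of this result are several pages long.
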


Using the same reasoning as in the proof of \cite[Lemma 2.4.2]{Laine}, we easily get the following lemma.

\begin{lemma}\cite{Laine-Latreuch}\label{lemma24}
Let $f$ be a transcendental meromorphic solution of finite order $\rho$ of
a differential-difference equation:
 \begin{equation*}
   f^{n}P(z,f)=Q(z,f),
 \end{equation*}
where $P(z,f)$ and $Q(z,f)$ are delay-differential polynomials in $f$
with $\lambda$-small coefficients of $f$. If the total degree of $Q(z,f)$
is $\leq n$, then for each $\varepsilon>0$,
\begin{eqnarray*}
m(r,P(z,f))=O(r^{\rho-1+\varepsilon})+S_{\lambda}(r,f).
\end{eqnarray*}
\end{lemma}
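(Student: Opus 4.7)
The plan is to carry out a Clunie-type argument adapted to the delay-differential setting, splitting the integral defining $m(r,P(z,f))$ over the circle $|z|=r$ into the two complementary sets
$$E_{1}=\{\theta\in[0,2\pi):|f(re^{i\theta})|\le 1\},\qquad E_{2}=\{\theta\in[0,2\pi):|f(re^{i\theta})|>1\}.$$
The key input is the logarithmic derivative estimate for shifts due to Chiang--Feng and Halburd--Korhonen, which for a meromorphic function of finite order $\rho$ and each $\varepsilon>0$ yields
$$m\!\left(r,\frac{f(z+c)}{f(z)}\right)=O(r^{\rho-1+\varepsilon})+S(r,f),$$
so that combining with the classical lemma on logarithmic derivatives and writing $f^{(k)}(z+c)/f(z)=[f^{(k)}(z+c)/f(z+c)]\cdot[f(z+c)/f(z)]$ we obtain the analogous bound for every mixed shift-and-derivative quotient $f^{(k)}(z+c)/f(z)$.

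On $E_{2}$ I would exploit the equation $P(z,f)=Q(z,f)/f^{n}$. Each monomial of $Q$ has the form $b_{J}(z)\prod_{j}[f^{(k_j)}(z+c_j)]^{n_j}$ with $\sum_{j}n_{j}\le n$, so I rewrite it as
$$\frac{b_{J}(z)\prod_{j}[f^{(k_j)}(z+c_j)]^{n_j}}{f(z)^{n}}=b_{J}(z)\,f(z)^{\sum n_j-n}\prod_{j}\!\left[\frac{f^{(k_j)}(z+c_j)}{f(z)}\right]^{n_j}.$$
Since $\sum n_{j}-n\le 0$ and $|f|>1$ on $E_{2}$, the factor $|f|^{\sum n_j-n}$ is bounded by $1$. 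Taking $\log^{+}$, integrating over $E_{2}$, and applying the shift logarithmic-derivative estimate above to each quotient gives
$$\frac{1}{2\pi}\int_{E_{2}}\log^{+}|P(re^{i\theta},f)|\,d\theta=O(r^{\rho-1+\varepsilon})+S_{\lambda}(r,f),$$
where the contribution of the coefficients $b_{J}$ is absorbed into $S_{\lambda}(r,f)$ by hypothesis.

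On $E_{1}$ I would treat $P$ directly. A typical monomial $a_{J}(z)\prod_{j}[f^{(k_j)}(z+c_j)]^{n_j}$ of $P$ can be written as
$$a_{J}(z)\,f(z)^{\sum n_j}\prod_{j}\!\left[\frac{f^{(k_j)}(z+c_j)}{f(z)}\right]^{n_j},$$
and on $E_{1}$ we have $|f(z)|^{\sum n_j}\le 1$. The same logarithmic-derivative estimate then yields
$$\frac{1}{2\pi}\int_{E_{1}}\log^{+}|P(re^{i\theta},f)|\,d\theta=O(r^{\rho-1+\varepsilon})+S_{\lambda}(r,f).$$
Summing the two bounds produces the desired conclusion $m(r,P(z,f))=O(r^{\rho-1+\varepsilon})+S_{\lambda}(r,f)$.

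The only delicate point is bookkeeping: one must ensure that the exceptional set arising in the shift-logarithmic-derivative estimate has finite logarithmic measure and can be absorbed into the definition of $S_{\lambda}(r,f)$, and that the finitely many monomial contributions (each with $\lambda$-small coefficient) can be added together without spoiling the $O(r^{\rho-1+\varepsilon})+S_{\lambda}(r,f)$ estimate. This is routine because the hypothesis on coefficients gives $m(r,a_{J})+m(r,b_{J})=S_{\lambda}(r,f)$ and the exponents $n_{j}$ are bounded constants. The main substantive obstacle is simply invoking the shift version of the lemma on the logarithmic derivative in the form needed for mixed expressions $f^{(k_j)}(z+c_j)/f(z)$; once that is in place the argument is a direct transcription of the classical Clunie lemma to the delay-differential context, essentially as in \cite{Laine-Latreuch}.
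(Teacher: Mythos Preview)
Your proposal is correct and is exactly the Clunie-type argument the paper points to: the authors do not write out a proof but state that the lemma follows ``using the same reasoning as in the proof of \cite[Lemma~2.4.2]{Laine}'' and cite \cite{Laine-Latreuch}, which is precisely the $E_{1}/E_{2}$ splitting combined with the Chiang--Feng shift estimate $m\bigl(r,f(z+c)/f(z)\bigr)=O(r^{\rho-1+\varepsilon})$ that you carry out in detail.
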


The following lemma, which is a special case of   \cite[Theorem 3.1]{Korhonen2009}, gives a relationship for the
Nevanlinna characteristic of a meromorphic function with its shift.
\begin{lemma}\cite{Korhonen2009}\label{lemmaxza}
Let $f(z)$ be a meromorphic function with the hyper-order less than one, and
$c\in\mathbb{C}\setminus \{0\}$. Then we have
\begin{eqnarray*}
 T(r,f(z+c))=T(r,f(z))+S(r,f).
\end{eqnarray*}
\end{lemma}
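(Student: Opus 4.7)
The plan is to prove the two-sided bound $T(r, f(z+c)) = T(r, f(z)) + S(r,f)$ by establishing one direction and then invoking symmetry. Since $c \mapsto -c$ converts $f(z+c)$ back to $f$, it suffices to prove $T(r, f(z+c)) \leq T(r, f) + S(r,f)$; applying this with $(c, f)$ replaced by $(-c, f(z+c))$ and combining yields equality.

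For the one-sided estimate, I would split into counting and proximity parts. On the circle $|z|=r$ one has $|z+c| \leq r+|c|$, so every pole of $f(z+c)$ in $|z|\leq r$ corresponds to a pole of $f$ in $|z|\leq r+|c|$, giving
\begin{equation*}
N(r, f(z+c)) \leq N(r+|c|, f).
\end{equation*}
For the proximity function, a Poisson-Jensen / mean-value argument on the slightly larger disk $|w|\leq r+|c|$ (writing $f(z+c)$ as a function of $w=z+c$ and estimating $\log^+|f(w)|$ on the image contour) yields
\begin{equation*}
m(r, f(z+c)) \leq m(r+|c|, f) + O(\log r).
\end{equation*}
Combining these gives $T(r, f(z+c)) \leq T(r+|c|, f) + O(\log r)$.

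The second, and main, step is to absorb the radius shift $r \mapsto r+|c|$ into an $S(r,f)$ error. For this I would use a Borel-type lemma for non-decreasing functions: if a non-decreasing continuous function $T(r)$ has hyper-order strictly less than one, i.e.\ $\limsup_{r\to\infty} \log\log T(r)/\log r = \rho_2 < 1$, then for any fixed $\eta>0$,
\begin{equation*}
T(r+\eta) = T(r) + o(T(r)) \qquad (r\to\infty,\ r\notin E),
\end{equation*}
where $E$ has finite logarithmic measure. The idea is that $\log T(r) = O(r^{\rho_2+\varepsilon})$ with $\rho_2+\varepsilon<1$, so outside a small exceptional set the derivative of $\log T$ is $o(1)$, which over a fixed increment $\eta$ forces multiplicative change $1+o(1)$.

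The main obstacle is this second step: one must handle the exceptional set bookkeeping cleanly so that the $o(T(r))$ conclusion is genuinely $S(r,f)$ in the paper's sense (finite logarithmic measure, not merely finite linear measure), and the Borel-type lemma must be applied in a form compatible with hyper-order rather than the weaker Nevanlinna-style condition $\log T(r)=o(r)$. Once that lemma is in hand, applying it with $\eta=|c|$ converts $T(r+|c|,f)+O(\log r)$ into $T(r,f)+S(r,f)$, which completes both inequalities and hence the lemma.
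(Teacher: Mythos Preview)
The paper does not actually prove this lemma: it is quoted verbatim as a special case of \cite[Theorem~3.1]{Korhonen2009}, with no argument supplied. So there is nothing in the paper to compare your proposal against.

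That said, your outline is essentially the standard route to this result (cf.\ \cite{Chiang-Feng} for finite order and \cite{Korhonen2009} for hyper-order $<1$): bound $T(r,f(z+c))$ by the characteristic at a slightly larger radius, then use a Borel-type growth lemma to absorb the radius increment into an $S(r,f)$ error, and finish by symmetry in $c\leftrightarrow -c$. One technical caution: the proximity estimate is not literally $m(r,f(z+c))\le m(r+|c|,f)+O(\log r)$. The circle $|z|=r$ is mapped by $z\mapsto z+c$ to an off-center circle, and the Poisson-kernel (or Cartan-characteristic) comparison produces a multiplicative factor of the type $\dfrac{R+r+|c|}{R-r-|c|}$ in front of $T(R,f)$, not merely an additive $O(\log r)$. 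One then chooses $R$ as a function of $r$ (for instance $R=r+r^{\delta}$ with $\rho_{2}<\delta<1$) so that this factor is $1+o(1)$ while the increment $R-r$ is still controllable by the Borel lemma under the hyper-order hypothesis. With that refinement your plan matches the original argument.
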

Observe that
\begin{eqnarray*}
 m\left(r,\frac{f^{(k)}(z+c)}{f(z)} \right)
\leq m\left(r,\frac{f^{(k)}(z+c)}{f(z+c)} \right)+m\left(r,\frac{f(z+c)}{f(z)} \right),
\end{eqnarray*}
by using Logarithmic Derivative Lemma and its difference analogues (see \cite{Chiang-Feng, Halburd-Korhonen,Korhonen2009,Laine}),
Lemma~\ref{lemmaxza}, we obtain the following lemma, see also \cite{Liu-Laine-Yang}.
\begin{lemma}\label{lemmaxzab}
Let $f$ be a transcendental meromorphic function of finite order.
Then
\begin{eqnarray}\label{lemmazabeq1}
  m \left(r,\frac{f^{(k)}(z+c)}{f(z)}\right)=S(r,f),
\end{eqnarray}
outside a possible exceptional set of finite logarithmic measure.
\end{lemma}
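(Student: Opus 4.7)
The plan is to exploit the factorization
\[
\frac{f^{(k)}(z+c)}{f(z)} \;=\; \frac{f^{(k)}(z+c)}{f(z+c)} \cdot \frac{f(z+c)}{f(z)},
\]
which, after applying $\log^{+}$ and integrating, gives exactly the triangle-type inequality for proximity functions already displayed in the excerpt preceding the lemma. It then suffices to prove that each of the two proximity quantities on the right is $S(r,f)$ outside a set of finite logarithmic measure.

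First I would handle the ``derivative'' factor $m(r, f^{(k)}(z+c)/f(z+c))$. Setting $g(z) := f(z+c)$, one has $g^{(k)}(z) = f^{(k)}(z+c)$, so the classical Logarithmic Derivative Lemma applied to the meromorphic function $g$ yields $m(r, g^{(k)}/g) = S(r,g)$ outside an exceptional set of finite logarithmic measure. Since $f$ is of finite order (hence, in particular, of hyper-order strictly less than one), Lemma~\ref{lemmaxza} gives $T(r,g) = T(r,f(z+c)) = T(r,f) + S(r,f)$, so any quantity that is $o(T(r,g))$ is also $o(T(r,f))$. Consequently $m(r, f^{(k)}(z+c)/f(z+c)) = S(r,f)$ outside a set of finite logarithmic measure.

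Next I would treat the ``shift'' factor $m(r, f(z+c)/f(z))$. Here I would invoke the difference analogue of the Logarithmic Derivative Lemma for finite-order meromorphic functions, as established by Chiang--Feng and Halburd--Korhonen (cited in the paper), which directly yields $m(r, f(z+c)/f(z)) = S(r,f)$ outside a possible exceptional set of finite logarithmic measure.

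Combining the two bounds via the inequality above, and noting that the union of two sets of finite logarithmic measure still has finite logarithmic measure, produces the desired estimate \eqref{lemmazabeq1}. There is no substantial obstacle in this argument; the only point requiring care is bookkeeping of the exceptional sets and observing that the shift does not spoil the $S(r,f)$ bound, which is guaranteed precisely by Lemma~\ref{lemmaxza}.
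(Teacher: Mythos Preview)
Your proposal is correct and follows essentially the same approach as the paper: the paper likewise uses the factorization inequality
\[
m\left(r,\frac{f^{(k)}(z+c)}{f(z)}\right)\le m\left(r,\frac{f^{(k)}(z+c)}{f(z+c)}\right)+m\left(r,\frac{f(z+c)}{f(z)}\right),
\]
then appeals to the classical Logarithmic Derivative Lemma, its difference analogues (Chiang--Feng, Halburd--Korhonen), and Lemma~\ref{lemmaxza} to conclude. Your write-up is simply a more explicit version of the paper's sketch.
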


Applying Lemma~\ref{lemmaxzab}, we obtain the following lemma.
\begin{lemma}\label{lemma000}
Let $f$ be an entire function of finite order $\rho$, $\alpha (\not\equiv 0)$,
$\beta$ be $\lambda$-small functions of $f$, $L(z,f)$ be non-vanishing linear  delay-differential polynomial defined as in \eqref{thmABABnew2eq1add1} and
$n\geq 2$. Then $\Phi_{n}-\beta$ is transcendental and satisfying $\sigma(\Phi_{n}-\beta)=\rho$.
\end{lemma}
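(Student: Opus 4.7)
The plan is to prove both halves of the statement — transcendence and order equality — by extracting a two-sided comparison between $T(r,\Phi_n-\beta)$ and $T(r,f)$.

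For the upper bound $\sigma(\Phi_n-\beta)\le\rho$, I would argue directly that each building block of $\Phi_n-\beta=L(z,f)-\alpha f^n-\beta$ has order at most $\rho$. The coefficients $b_j,\alpha,\beta$ are $\lambda$-small with $\lambda<\rho$, so they are of order at most $\lambda$. Differentiation does not increase the order, and by Lemma~\ref{lemmaxza} the shifts $f^{(k_j)}(z+c_j)$ have the same order as $f$. Hence $L(z,f)$ has order at most $\rho$, $\alpha f^n$ has order exactly $\rho$, and the sum $\Phi_n-\beta$ has order $\le\rho$.

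For the lower bound, which is also what secures transcendence, the key manipulation is to solve the defining equation for $f^n$:
\begin{equation*}
\alpha f^n \;=\; L(z,f)-(\Phi_n-\beta)-\beta.
\end{equation*}
Since $f$ is entire, $T(r,f)=m(r,f)$. Applying the proximity function to both sides and using $m(r,\alpha)=S_\lambda(r,f)$ gives
\begin{equation*}
nT(r,f)\;\le\; m\bigl(r,L(z,f)\bigr)+m(r,\Phi_n-\beta)+S_\lambda(r,f).
\end{equation*}
Now I would estimate $m(r,L(z,f))$ by factoring out $f(z)$:
\begin{equation*}
m\bigl(r,L(z,f)\bigr)\;\le\; m(r,f)+\sum_{j=0}^{m}m\!\left(r,\frac{f^{(k_j)}(z+c_j)}{f(z)}\right)+\sum_{j=0}^{m}m(r,b_j)+O(1),
\end{equation*}
and invoke Lemma~\ref{lemmaxzab} together with the $\lambda$-smallness of the $b_j$ to collapse the sums into $S_\lambda(r,f)$. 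This yields
\begin{equation*}
(n-1)\,T(r,f)\;\le\; T(r,\Phi_n-\beta)+S_\lambda(r,f),
\end{equation*}
valid for $n\ge 2$ off a set of finite logarithmic measure.

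From this inequality both conclusions follow quickly. If $\Phi_n-\beta$ were rational (in particular a polynomial or a constant), then $T(r,\Phi_n-\beta)=O(\log r)$, contradicting $(n-1)T(r,f)\le T(r,\Phi_n-\beta)+S_\lambda(r,f)$ because $f$ is transcendental of order $\rho>\lambda$. Hence $\Phi_n-\beta$ is transcendental. Taking $\limsup\log(\cdot)/\log r$ in the displayed inequality, and using that $S_\lambda(r,f)=O(r^{\lambda+\varepsilon})+S(r,f)=o(T(r,f))$ outside the exceptional set, gives $\sigma(\Phi_n-\beta)\ge\rho$. Combined with the upper bound, $\sigma(\Phi_n-\beta)=\rho$.

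The delicate part — and what I expect to be the main obstacle — is the bookkeeping of exceptional sets: the logarithmic-derivative/shift estimate in Lemma~\ref{lemmaxzab} and the definition of $S_\lambda(r,f)$ each carry an exceptional set of finite logarithmic measure, and I must verify that $T(r,f)$ dominates $S_\lambda(r,f)$ outside their union when passing to the order. Because $\lambda<\rho$, choosing $\varepsilon\in(0,\rho-\lambda)$ makes the $O(r^{\lambda+\varepsilon})$ piece negligible, and the standard argument removing an exceptional set from an order computation takes care of the rest.
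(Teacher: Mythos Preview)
Your proposal is correct and follows essentially the same route as the paper: both isolate $\alpha f^n$, bound $m(r,L(z,f))\le m(r,f)+S_\lambda(r,f)$ via Lemma~\ref{lemmaxzab}, and derive $(n-1)T(r,f)\le T(r,\Phi_n-\beta)+S_\lambda(r,f)$ to obtain both transcendence and the lower order bound, while the upper bound is immediate (the paper records it as the companion estimate $T(r,\Phi_n-\beta)\le(n+1)T(r,f)+S_\lambda(r,f)$). One minor slip: $\lambda$-small functions need not have order $\le\lambda$ because of the $S(r,f)$ term in the definition, but since all you need for the upper bound is order $\le\rho$, this is harmless.
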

\begin{proof}
We first assume that $\Phi_{n}-\beta$ is transcendental. Indeed, if not,
then $\Phi_{n}-\beta=R(z)$ is rational, and $f^{n}=\alpha^{-1}(L(z,f)-\beta-R(z))$. Therefore, by Lemma~\ref{lemmaxzab},
we obtain
\begin{eqnarray*}
  nT(r,f)&=&T(r,f^{n})
  \leq T(r,L(z,f))+S_{\lambda}(r,f)\\
  &=& m(r,L(z,f))+S_{\lambda}(r,f) \\
  &\leq& m\left(r,\frac{L(z,f)}{f}\right)+m(r,f)+S_{\lambda}(r,f)\\
   &\leq& \sum_{j=0}^{m} m\left(r,  \frac{f^{(k_{j})}(z+c_{j})}{f(z)} \right)+m(r,f)+S_{\lambda}(r,f) \\
  &\leq& T(r,f)+S_{\lambda}(r,f),
\end{eqnarray*}
a contradiction follows since $n\geq 2$.

Next, we prove that $\sigma(\Phi_{n}-\beta)=\rho(f)$. By Lemma~\ref{lemmaxzab}, we have
\begin{eqnarray}\label{th3eq2lem}
  T(r,\Phi_{n}-\beta)&=&T\left(r,\left(L(z,f)-\alpha f^{n}-\beta  \right)  \right)\nonumber \\
  &\leq& T(r,f^{n})+T(r, L(z,f))+S_{\lambda}(r,f)\nonumber  \\
  &=& nT(r,f)+m(r,L(z,f))+S_{\lambda}(r,f) \nonumber\\
  &\leq& nT(r,f)+m\left(r,\frac{L(z,f)}{f} \right)+m(r,f)+S_{\lambda}(r,f) \nonumber\\
  &=& (n+1)T(r,f)+S_{\lambda}(r,f), 
\end{eqnarray}
and
\begin{eqnarray}\label{th3eq3add2lem}
   T(r,\Phi_{n}-\beta)&=&T\left(r,\left(L(z,f)-\alpha f^{n}-\beta  \right)  \right)\nonumber \\
  &\geq& T(r,f^{n})-T(r, L(z,f))-S_{\lambda}(r,f)\nonumber  \\
&=& nT(r,f)-m(r,L(z,f))-S_{\lambda}(r,f) \nonumber\\
&\geq& nT(r,f)-m(r,f)-S_{\lambda}(r,f) \nonumber\\
 &=& (n-1)T(r,f)-S_{\lambda}(r,f).
\end{eqnarray}
Therefore, combining with $\lambda<\rho$, from \eqref{th3eq2lem} and
  \eqref{th3eq3add2lem}  we have $\sigma(\Phi_{n}-\beta)=\rho$.

\end{proof}

\section{Proof of Theorem~\ref{thmABABnew2}.}

Firstly, we  prove the case $\rho>0$. Suppose now, contrary to the assertion, that
$\lambda(\Phi_{n}-\beta)=\lambda<\rho$. From Lemma~\ref{lemma000}, we obtain that $\Phi_{n}-\beta$ is transcendental
and \eqref{th3eq3add2lem} holds.
By the
standard Hadamard representation, we may write
\begin{eqnarray}\label{th3eq1}
 \Phi_{n}-\beta= L(z,f)-\alpha f^{n}-\beta=\pi e^{g},
\end{eqnarray}
where $\pi$ is a non-vanishing $\lambda$-small function of
$f$, and $g$ is a polynomial with $\deg g\leq \rho$.
Actually, $\deg g=\rho$. Otherwise, if $\deg g\leq \mu<\rho$, then
from \eqref{th3eq3add2lem}  and \eqref{th3eq1}, we obtain
\begin{eqnarray*}
 (n-1)T(r,f)-S_{\lambda}(r,f)\leq  T(r,\Phi_{n}-\beta)=O(r^{\mu+\varepsilon})+S_{\lambda}(r,f),
\end{eqnarray*}
leading to $\rho\leq \max\{\mu,\lambda\}<\rho$ by $n\geq 3$, a contradiction.

Differentiating \eqref{th3eq1} and eliminating $e^{g}$, we obtain
\begin{eqnarray}\label{th3eq2}
  f(z)^{n-1}G(z,f)=H(z,f),
\end{eqnarray}
where
\begin{eqnarray*}
 G(z,f):=\left(\left(\frac{\pi'}{\pi}+g'\right)\alpha-\alpha'\right)f-n\alpha f'
\end{eqnarray*}
and
\begin{eqnarray*}
 H(z,f):=\left(\frac{\pi'}{\pi}+g'\right)L -L'
 -\left(\frac{\pi'}{\pi}+g'\right)\beta+\beta'.
\end{eqnarray*}

Case 1. $G(z, f)\equiv 0$.  Then we have $\alpha f^{n}=\widetilde{c}\pi e^{g}$
for some non-zero constant $\widetilde{c}$. By \eqref{th3eq1}, we get
\begin{eqnarray}\label{th3eq3}
  L-\beta=\left(\frac{1}{\widetilde{c}}+1 \right)\alpha f^{n}.
\end{eqnarray}

Subcase 1.1. $\widetilde{c}=-1$. Then we have $f=(-\pi/\alpha)^{1/n}e^{g/n}$ and $L\equiv\beta$. This gives that
\begin{eqnarray}\label{thmABABnew2eq1}
  L(z,f)
  &=&\sum_{j=0}^{m}b_{j}(z)\left(\left(-\frac{\pi(z+c_{j})}
  {\alpha(z+c_{j})}\right)^{1/n}
  e^{\frac{g(z+c_{j})}{n}}\right)^{(k_{j})}\nonumber \\
  &=& \sum_{j=0}^{m}b_{j}(z)\gamma(z+c_{j}) e^{\frac{g(z+c_{j})}{n}}\nonumber \\
  &=& \left(\sum_{j=0}^{m}b_{j}(z)\gamma(z+c_{j}) e^{\frac{g(z+c_{j})-g(z)}{n}}\right)e^{\frac{g(z)}{n}}
  =\beta,
\end{eqnarray}
where $\gamma$ is a differential polynomial of $\left(-\frac{\pi}
  {\alpha}\right)^{1/n}$ and $g$. Obviously, by Lemma~\ref{lemmaxza},
 $T(r,\gamma(z+c_{j}))=T(r,\gamma(z))+S(r,\gamma(z))=S_{\lambda}(r,f)$.

 If $\sum_{j=0}^{m}b_{j}(z)\gamma(z+c_{j})e^{\frac{g(z+c_{j})-g(z)}{n}}\equiv 0$, then we have
 $L(z,f)\equiv\beta\equiv 0$, a  contradiction with the assumption that $L(z,f)\not\equiv 0$.

 If $\sum_{j=0}^{m}b_{j}(z)\gamma(z+c_{j})
 e^{\frac{g(z+c_{j})-g(z)}{n}}\not\equiv 0$,
next we  prove  that
any $\lambda$-small function of $f$ is also a small function of $e^{g}$.
From  \eqref{th3eq3add2lem}  and \eqref{th3eq1},  we have
\begin{eqnarray}\label{th3eq3add2}
  T(r,e^{g})&=&T\left(r,\frac{1}{\pi}(\Phi_{n}-\beta)\right)
  \geq T(r,\Phi_{n}-\beta)-T(r,\pi)\nonumber \\
  &=& T(r,\Phi_{n}-\beta)-S_{\lambda}(r,f)\nonumber  \\
  &\geq& (n-1)T(r,f)-S_{\lambda}(r,f) \nonumber  \\
&=&(n-1-o(1))T(r,f)- O(r^{\lambda+\varepsilon}).
\end{eqnarray}

By applying the exponential polynomial theory (see \cite[Lemma 2.6]{Li-Yang} or \cite{Steinmetz}), we have
\begin{eqnarray}\label{th3eq400666}
  T(r,e^{g})=Ar^{\rho}+o(r^{\rho}),
\end{eqnarray}
where $A$ is a non-zero constant. Combining \eqref{th3eq3add2} with \eqref{th3eq400666},  we obtain
\begin{eqnarray*}
  \frac{S_{\lambda}(r,f)}{T(r,e^{g})}&=&\frac{O(r^{\lambda+\varepsilon})+S(r,f)}{T(r,e^{g})}
=\frac{O(r^{\lambda+\varepsilon})}{T(r,e^{g})}+\frac{S(r,f)}{T(r,e^{g})}\nonumber \\
&\leq& \frac{O(r^{\lambda+\varepsilon})}{Ar^{\rho}}+\frac{S(r,f)}{T(r,f)}\cdot \frac{2T(r,e^{g})+O(r^{\lambda+\varepsilon})}{T(r,e^{g})}\nonumber \\
&=&   \frac{O(r^{\lambda+\varepsilon})}{Ar^{\rho}}+\frac{S(r,f)}{T(r,f)}\cdot \left(2+\frac{O(r^{\lambda+\varepsilon})}{Ar^{\rho}}\right)\rightarrow 0,
\, \textrm{as}\,
 r\to \infty.
\end{eqnarray*}
 Thus  $S_{\lambda}(r,f)\subseteq S(r,e^{g})$. By combining with
 \eqref{thmABABnew2eq1}, we have
 \begin{eqnarray*}
T\left(r,e^{g(z)}\right)=T\left(r, \frac{\beta^{n}}{\left(\sum_{j=0}^{m}b_{j}(z)\gamma(z+c_{j})
e^{\frac{g(z+c_{j})-g(z)}{n}}\right)^{n}} \right)=S(r,e^{g}),
 \end{eqnarray*}
which yields a contradiction.

Subcase 1.2. $\widetilde{c}\neq-1$. Then from  Lemma~\ref{lemmaxzab} and \eqref{th3eq3}, we get
\begin{eqnarray*}
  nT(r,f)&=&T(r,f^{n})=T\left(r, \frac{L-\beta}{\left(\frac{1}{\widetilde{c}}+1\right)\alpha}  \right)\\
  &\leq& T(r,L)+S_{\lambda}(r,f)\\
  &\leq& m\left(r,\frac{L}{f}\right)+m(r,f)+S_{\lambda}(r,f)\\
  &\leq& T(r,f)+S_{\lambda}(r,f),
\end{eqnarray*}
which yields a contradiction since $n\geq3$.

Case 2. $G(z, f)\not\equiv 0$. Since $n\geq3$, by applying Lemma~\ref{lemma24} to \eqref{th3eq2},  we obtain
\begin{eqnarray*}
  T(r, G(z,f))=m(r,G(z,f))+N(r,G(z,f))
=O(r^{\rho-1+\varepsilon})+S_{\lambda}(r,f),
\end{eqnarray*}
and
\begin{eqnarray*}
  T(r,fG(z,f))=m(r,fG(z,f))+N(r,fG(z,f))=O(r^{\rho-1+\varepsilon})+S_{\lambda}(r,f).
\end{eqnarray*}
Therefore,
\begin{eqnarray*}
  T(r,f)=T\left(r, \frac{fG(z,f)}{G(z,f)}\right)\leq T(r,fG(z,f))+T(r,G(z,f))=O(r^{\rho-1+\varepsilon})+S_{\lambda}(r,f),
\end{eqnarray*}
which is a contradiction. Hence $\lambda(\Phi_{n}-\beta)=\rho(f)$.

Finally, we prove the case $\rho=0$. By Lemma~\ref{lemma000}, we have
$0\leq\lambda(\Phi_{n}-\beta)\leq\sigma(\Phi_{n}-\beta)=\rho=0$. Thus,
$\lambda(\Phi_{n}-\beta)=\rho=0$. Next we prove that $\Phi_{n}-\beta$ has
infinitely many zeros. Suppose contrary to the assertion, that
$\Phi_{n}-\beta$ has finitely many zeros, then by the
standard Hadamard representation and $\rho=0$, we may write
\begin{eqnarray}\label{th3eq1666}
 \Phi_{n}-\beta= L(z,f)-\alpha f^{n}-\beta=\widetilde{\pi},
\end{eqnarray}
where $\widetilde{\pi}$ is a non-vanishing small function of
$f$. Thus, by Lemma~\ref{lemmaxzab} we have
\begin{eqnarray*}
 nT(r,f)=T(r,f^{n})&=&T\left(r,\frac{L(z,f)-\widetilde{\pi}-\beta}{\alpha} \right)\\
 &\leq&  T(r,L(z,f))+S(r,f)\\
&=& m(r,L(z,f))+S(r,f) \\
 &\leq& \sum_{j=0}^{m} m\left(r,  \frac{f^{(k_{j})}(z+c_{j})}{f(z)} \right)+m(r,f)+S(r,f) \\
  &\leq& T(r,f)+S(r,f),
\end{eqnarray*}
leading to a contradiciton by $n\geq 3$. Thus $\Phi_{n}-\beta$ has infinitely many zeros.

\section{Proof of Theorem~\ref{thmABABnew4}.}
(i) Suppose that $d$ is a Borel exceptional value
of $f(z)$, and
\begin{eqnarray*}
\left(\sum_{j\in I_{1}}b_{j}\right) d-\alpha d^{2}-\beta\not\equiv 0.
\end{eqnarray*}
Then $f(z)$ can be written in the form
\begin{eqnarray}\label{thmABABnew3eq1add0}
  f(z)=d+h(z)e^{az^{\rho}},
\end{eqnarray}
where $a\neq 0$ is a constant,   $\rho(\geq 1)$ is an integer, and
 and $h$ is a non-vanishing
entire function such that $\sigma(h)<\rho$.
Thus
\begin{eqnarray}\label{thmABABnew3eq1add}
  f(z+c_{j})&=&d+h(z+c_{j})e^{a(z+c_{j})^{\rho}}\nonumber \\
  &=&d+\left(h(z+c_{j})e^{a(z+c_{j})^{\rho}-az^{\rho}}\right)e^{az^{\rho}}\nonumber \\
  &=& d+h(z+c_{j})\widetilde{h}_{c_{j}}e^{az^{\rho}},
\end{eqnarray}
where  $\widetilde{h}_{c_{j}}=e^{a(z+c_{j})^{\rho}-az^{\rho}}$. Combining with  Lemma~\ref{lemmaxza},  $\sigma(h(z+c_{j})\widetilde{h}_{c_{j}})<\rho$.
For $k_{j}>0$, differentiating iteratively,  we obtain by elementary computation that
\begin{eqnarray}\label{thmABABnew3eq2add}
f^{(k_{j})}(z+c_{j})&=&(d+h(z+c_{j})e^{a(z+c_{j})^{\rho}})^{(k_{j})}\nonumber\\
&=&d^{(k_{j})}+\left(h(z+c_{j})e^{a(z+c_{j})^{\rho}}\right)^{(k_{j})}\nonumber\\
&=&h_{c_{j},k_{j}}e^{a(z+c_{j})^{\rho}}=h_{c_{j},k_{j}}\widetilde{h}_{c_{j}}e^{az^{\rho}},
\end{eqnarray}
where $h_{c_{j},k_{j}}$ are differential polynomials in $h(z+c_{j})$ and $a(z+c_{j})^{\rho}$ . Obviously, $\sigma(h_{c_{j},k_{j}}\widetilde{h}_{c_{j}})<\rho$.
On the other hand, we may write $L(z,f)$ as
\begin{eqnarray}\label{thmABABnew3eq2add900}
L(z,f)
= \sum_{j\in I_{1}}b_{j}(z)f(z+c_{j})+\sum_{j\in I_{2}}b_{j}(z)f^{(k_{j})}(z+c_{j})
\end{eqnarray}
where $I_{1}=\{0\leq j\leq m: k_{j}=0\}$ and $I_{2}=\{0\leq j\leq m: k_{j}>0\}$. Thus, by substituting  \eqref{thmABABnew3eq1add} and  \eqref{thmABABnew3eq2add} into \eqref{thmABABnew3eq2add900},
we obtain
\begin{eqnarray}\label{thmABABnew3eq2add920}
L(z,f)
&=& \sum_{j\in I_{1}}b_{j}(z)\left( d+h(z+c_{j})\widetilde{h}_{c_{j}}e^{az^{\rho}} \right)+\sum_{j\in I_{2}}b_{j}(z)h_{c_{j},k_{j}}\widetilde{h}_{c_{j}}e^{az^{\rho}}\nonumber\\
&=& \left(\sum_{j\in I_{1}}b_{j}\right)d+
\left( \sum_{j\in I_{1}}b_{j}h(z+c_{j})\widetilde{h}_{c_{j}}+
 \sum_{j\in I_{2}}b_{j}h_{c_{j},k_{j}}\widetilde{h}_{c_{j}}\right)e^{az^{\rho}}
\end{eqnarray}
By combining with  \eqref{thmABABnew3eq1add0} we get
\begin{eqnarray}\label{thmABABnew3eq2add901}
  \Phi_{2}&=&L(z,f)-\alpha f^{2}\nonumber\\
&=&\left(\sum_{j\in I_{1}}b_{j}\right)d+
\left( \sum_{j\in I_{1}}b_{j}h(z+c_{j})\widetilde{h}_{c_{j}}+
 \sum_{j\in I_{2}}b_{j}h_{c_{j},k_{j}}\widetilde{h}_{c_{j}}\right)e^{az^{\rho}}
-\alpha (d+he^{az^{\rho}})^{2}\nonumber\\
&=& \widetilde{\gamma}(z)
e^{az^{\rho}} -\alpha h^{2}e^{2az^{\rho}}
+\left(\sum_{j\in I_{1}}b_{j}\right) d-\alpha d^{2},
\end{eqnarray}
where
\begin{eqnarray*}
  \widetilde{\gamma}(z)=\sum_{j\in I_{1}}b_{j}h(z+c_{j})\widetilde{h}_{c_{j}}+\sum_{j\in I_{2}}b_{j}h_{c_{j},k_{j}}\widetilde{h}_{c_{j}}-2\alpha d h.
\end{eqnarray*}

By Lemma~\ref{lemma000}, $\sigma(\Phi_{2}-\beta)=\rho$.  If $\lambda(\Phi_{2}-\beta)<\rho=\sigma(\Phi_{2}-\beta)$, then $\beta$ is
a Borel exceptional small function of $\Phi_{2}$, and
we can rewrite $\Phi_{2}$ as follow:
\begin{eqnarray}\label{thmABABnew3eq2add90200}
 \Phi_{2}=\beta+h^{*}(z)e^{b z^{\rho}},
\end{eqnarray}
where $b(\neq 0)$ is a constant, and $h^{*}(\not\equiv 0)$ is
an entire function with $\sigma(h^{*})<\rho$.   By \eqref{thmABABnew3eq2add901} and \eqref{thmABABnew3eq2add90200} we have
\begin{eqnarray}\label{thmABABnew3eq2add903}
  h^{*}(z)e^{b z^{\rho}}=\widetilde{\gamma}(z)
e^{az^{\rho}} -\alpha h^{2}e^{2az^{\rho}}
+\left(\sum_{j\in I_{1}}b_{j}\right) d-\alpha d^{2}-\beta.
\end{eqnarray}

In \eqref{thmABABnew3eq2add903}, there are three cases for $b$: Case 1. $b\neq a$ and $b\neq 2a$; Case 2. $b=a$; Case 3. $b=2a$.

Applying Lemma~\ref{lemma21} to \eqref{thmABABnew3eq2add903} for all
these three cases, we obtain
\begin{eqnarray*}
  \left(\sum_{j\in I_{1}}b_{j}\right) d-\alpha d^{2}-\beta\equiv0,
\end{eqnarray*}
which contradicts our assumption that
\begin{eqnarray*}
\beta\not\equiv\left(\sum_{j\in I_{1}}b_{j}\right) d-\alpha d^{2}.
\end{eqnarray*}
Hence, $\lambda(\Phi_{2}(z)-\beta)=\rho$.

(ii) Suppose that $d$ is a Borel exceptional value
of $f$, and
\begin{eqnarray}\label{thmABABnew3eq2add901aab}
\left(\sum_{j\in I_{1}}b_{j}\right) d-\alpha d^{2}-\beta\equiv 0.
\end{eqnarray}
 Using
the same method as before, we can obtain  \eqref{thmABABnew3eq1add0}, \eqref{thmABABnew3eq2add920} and \eqref{thmABABnew3eq2add901}.
By combining \eqref{thmABABnew3eq2add901} with \eqref{thmABABnew3eq2add901aab}, we have
\begin{eqnarray}\label{thmABABnew3eq2add901aaa}
  \Phi_{2}-\beta
= \widetilde{\gamma}(z)e^{az^{\rho}} -\alpha h^{2}e^{2az^{\rho}},
\end{eqnarray}

Next, we discuss the following two cases:

Case 1.  $\widetilde{\gamma}(z)\equiv 0$. Then
\begin{eqnarray}\label{thmABABnew3eq2add901kkk}
 \sum_{j\in I_{1}}b_{j}h(z+c_{j})\widetilde{h}_{c_{j}}+\sum_{j\in I_{2}}b_{j}h_{c_{j},k_{j}}\widetilde{h}_{c_{j}}\equiv2\alpha d h,
\end{eqnarray}
and
\eqref{thmABABnew3eq2add901aaa} can be reduced to
\begin{eqnarray}\label{thmABABnew3eq2add901aac}
  \Phi_{2}-\beta=-\alpha h^{2}e^{2az^{\rho}}.
\end{eqnarray}
By Lemma~\ref{lemma000}, $\sigma(\Phi_{2}-\beta)=\rho$. Combining with
$\sigma(h)<\rho$, we obtain $\lambda(\Phi_{2}-\beta)=\lambda( h^{2})\leq\sigma( h)<\rho$. Thus,  $\beta$ is a Borel exceptional small function of $\Phi_{2}$.

 From \eqref{thmABABnew3eq1add0} and \eqref{thmABABnew3eq2add901aac}, we have
\begin{eqnarray}\label{thmABABnew3eq2add907}
  \Phi_{2}&=&\beta-\alpha \left(he^{a z^{\rho}}\right)^{2}=
\beta-\alpha(f-d)^{2}
\end{eqnarray}
Hence
\begin{eqnarray*}
 \frac{\beta-\Phi_{2}}{(f-d)^{2}}=\alpha.
\end{eqnarray*}
Combining with \eqref{thmABABnew3eq1add0}, \eqref{thmABABnew3eq2add920},   \eqref{thmABABnew3eq2add901aab} and \eqref{thmABABnew3eq2add901kkk},
 we obtain
\begin{eqnarray*}
  L(z,f)&=&\left(\sum_{j\in I_{1}}b_{j}\right)d+
\left( \sum_{j\in I_{1}}b_{j}h(z+c_{j})\widetilde{h}_{c_{j}}+
 \sum_{j\in I_{2}}b_{j}h_{c_{j},k_{j}}\widetilde{h}_{c_{j}}\right)e^{az^{\rho}}\nonumber \\
&=& \alpha d^{2}+\beta+2\alpha d he^{az^{\rho}} \nonumber \\
&=& \alpha d^{2}+\beta+2\alpha d(f-d).
\end{eqnarray*}
Therefore,
\begin{eqnarray*}
  \frac{ L(z,f)-\alpha d^{2}-\beta}{2d(f-d)} =\alpha.
\end{eqnarray*}

Case 2.  $\widetilde{\gamma}(z)\not\equiv 0$.
 We
rewrite \eqref{thmABABnew3eq2add901aaa} as follow:
\begin{eqnarray}\label{thmABABnew3eq2add901aae}
  \Phi_{2}-\beta= \widetilde{\gamma}
e^{az^{\rho}} -\alpha h^{2}e^{2az^{\rho}}
=\alpha h^{2}e^{az^{\rho}}\left(\frac{\widetilde{\gamma}}{\alpha h^{2}}-e^{az^{\rho}}\right).
\end{eqnarray}
Next, we prove that $T(r, \widetilde{\gamma}/(\alpha h^{2}))=S(r,e^{az^{\rho}})$. By $\sigma(h)<\rho$, we have $T(r,h)=S(r,e^{az^{\rho}})$. Combining with Lemma~\ref{lemmaxza}, we have $T(r,h(z+c_{j}))=S(r,e^{az^{\rho}})$ and
$T(r, h_{c_{j},k_{j}})=S(r,e^{az^{\rho}})$.
We assert that  $S_{\lambda}(r,f)\subseteq S(r,e^{az^{\rho}})$. From \eqref{thmABABnew3eq1add0}, we have
\begin{eqnarray}\label{thmABABnew3eq2add901aag}
T(r,f)=T(r,e^{az^{\rho}})+S(r,e^{az^{\rho}}).
\end{eqnarray}
Thus,
\begin{eqnarray*}
  \frac{S_{\lambda}(r,f)}{T(r,e^{az^{\rho}})}&=&
\frac{O(r^{\lambda+\varepsilon})}{T(r,e^{az^{\rho}})}+\frac{S(r,f)}{T(r,e^{az^{\rho}})}\nonumber =\frac{O(r^{\lambda+\varepsilon})}{\frac{|a|}{\pi}r^{\rho}}+
\frac{S(r,f)}{T(r,f)}\to 0, \,\textrm{as}\, r\to \infty.
\end{eqnarray*}
So we have $T(r,b_{j})=S(r,e^{az^{\rho}})$.  Thus,
$T(r,\widetilde{\gamma}/(\alpha h^{2}))=S(r,e^{az^{\rho}})$.

By the first and second main theorems of Nevanlinna theory, we have
\begin{eqnarray*}
 T\left(r, e^{az^{\rho}}\right)
&\leq&
N\left(r,\frac{1}{e^{az^{\rho}}} \right)
+N\left(r,\frac{1}{e^{az^{\rho}}-\frac{\widetilde{\gamma}}{\alpha h^{2}}}  \right)
+N(r,e^{az^{\rho}})+S(r,e^{az^{\rho}})
\nonumber\\
&=& N\left(r,\frac{1}{e^{az^{\rho}}-\frac{\widetilde{\gamma}}{\alpha h^{2}}} \right)
+S(r,e^{az^{\rho}})\nonumber\\
&\leq& T\left(r, e^{az^{\rho}}\right)+S(r,e^{az^{\rho}}).
\end{eqnarray*}
So
\begin{eqnarray}\label{thmABABnew3eq2add901aaf}
 N\left(r,\frac{1}{\Phi_{2}-\beta}\right)= N\left(r,\frac{1}{e^{az^{\rho}}-\frac{\widetilde{\gamma}}{\alpha h^{2}}}  \right)
+S(r,e^{az^{\rho}})
=T\left(r,e^{az^{\rho}}\right)+S(r,e^{az^{\rho}}).
\end{eqnarray}
Thus, combining with \eqref{thmABABnew3eq2add901aag} and \eqref{thmABABnew3eq2add901aaf},
we obtain
\begin{eqnarray*}
  N\left(r,\frac{1}{\Phi_{2}-\beta}\right)=
T(r,f)+S_{\lambda}(r,f).
\end{eqnarray*}

\section{Proof of Theorem~\ref{thmABABnew7}.}

Suppose that $d=0$ is the Borel exceptional value of $f$.
Using
the same method as before (the proof of Theorem~\ref{thmABABnew4}), we can obtain \eqref{thmABABnew3eq2add920} and \eqref{thmABABnew3eq2add901} with $d=0$, i.e.,
\begin{eqnarray}\label{thmABABnew3eq2add800}
L(z,f)=
\widetilde{\gamma}(z)e^{az^{\rho}}.
\end{eqnarray}
and
\begin{eqnarray}\label{thmABABnew3eq2add901zzz}
  \Phi_{2}
= \widetilde{\gamma}(z)
e^{az^{\rho}} -\alpha h^{2}e^{2az^{\rho}},
\end{eqnarray}
where
\begin{eqnarray*}
 \widetilde{\gamma}(z)=\sum_{j\in I_{1}}b_{j}h(z+c_{j})\widetilde{h}_{c_{j}}+\sum_{j\in I_{2}}b_{j}h_{c_{j},k_{j}}\widetilde{h}_{c_{j}}.
\end{eqnarray*}
Next, we discuss the following two cases:

Case 1. $\beta\not\equiv 0$.  By Lemma~\ref{lemma000}, $\sigma(\Phi_{2}-\beta)=\rho$.  If $\lambda(\Phi_{2}(z)-\beta)<\rho$, then
we can rewrite $\Phi_{2}$ as follow:
\begin{eqnarray}\label{thmABABnew3eq2add902}
 \Phi_{2}=\beta+h^{*}e^{b z^{\rho}},
\end{eqnarray}
where $b(\neq 0)$ is a constant, and $h^{*}(\not\equiv 0)$ is
an entire function with $\sigma(h^{*})<\rho$. By \eqref{thmABABnew3eq2add901zzz} and \eqref{thmABABnew3eq2add902} we have
\begin{eqnarray}\label{thmABABnew3eq2add90300zza}
  \beta+h^{*}e^{b z^{\rho}}&=&\widetilde{\gamma}(z)
e^{az^{\rho}} -\alpha h^{2}e^{2az^{\rho}}.
\end{eqnarray}
In \eqref{thmABABnew3eq2add90300zza}, there are three cases for $b$: Case 1. $b\neq a$ and $b\neq 2a$; Case 2. $b=a$; Case 3. $b=2a$. Applying Lemma~\ref{lemma21} to \eqref{thmABABnew3eq2add90300zza} for
these three cases, we obtain $\beta\equiv 0$, which contradicts our
assumption that $\beta\not\equiv 0$. Hence $\lambda(\Phi_{2}(z)-\beta)=\rho$.

 Case 2. $\beta\equiv 0$. Obviously, $\widetilde{\gamma}(z) \not\equiv 0$. Otherwise,
by \eqref{thmABABnew3eq2add800} we obtain $L(z,f)\equiv 0$, a contradiction.
We rewrite \eqref{thmABABnew3eq2add901zzz} as follow:
\begin{eqnarray*}
  \Phi_{2}
= \alpha h^{2}e^{az^{\rho}}\left(\frac{\widetilde{\gamma}}{\alpha h^{2}}-e^{az^{\rho}} \right),
\end{eqnarray*}
following the same method as in the proof of case 2 in Theorem~\ref{thmABABnew4}(ii),
 we obtain
\begin{eqnarray*}
N\left(r,\frac{1}{\Phi_{2}}\right)&=&N\left(r,\frac{1}{e^{az^{\rho}}-\frac{\widetilde{\gamma}}{\alpha h^{2}}}  \right)
+S(r,e^{az^{\rho}})
=T\left(r,e^{az^{\rho}}\right)+S(r,e^{az^{\rho}})\\
&=&T(r,f)+S_{\lambda}(r,f).
\end{eqnarray*}
Hence, we have $\lambda(\Phi_{2})=\rho$.

\section*{Acknowledgments}
This work has no conflicts of interest. The author would like to thank the referees for a careful reading of the manuscript
and valuable comments! This work was supported by  NNSF of China (No.11801215), and the NSF of Shandong Province, P. R. China (No. ZR2018MA021).


\section*{References}


\def\cprime{$'$}

\end{document}